\newcommand{\La}{\langle}
\newcommand{\Ra}{\rangle}
\newcommand{\pd}{\partial}
\newcommand{\eps}{\varepsilon}
\newcommand{\la}{\lambda}
\newcommand{\unit}{1\!\!1}
\newcommand{\Xp}{X^{+}_{\tau, x}}
\newcommand{\Xn}{X^{-}_{\tau, x}}
\newcommand{\bL}{\mathbb{L}}
\newcommand{\bM}{\mathbb{M}}
\newcommand{\bR}{\mathbb{R}}
\newcommand{\cD}{\mathcal{D}}
\newcommand{\cI}{\mathcal{I}}
\newtheorem{theorem}{Theorem}[section]
\newtheorem{corollary}[theorem]{Corollary}
\newtheorem{lemma}[theorem]{Lemma}
\newtheorem{prop}[theorem]{Proposition}
\newtheorem{definition}{Definition}
\newtheorem{remark}[theorem]{Remark}
\numberwithin{equation}{section}
\begin{document}
\title[The Sharp Constant in the Weak (1,1) Inequality for the Square Function]{The Sharp Constant in the Weak (1,1) Inequality for the Square Function: A New Proof}
\author[I.~Holmes, P.~Ivanisvili, A.~Volberg]{I.~Holmes, P.~Ivanisvili, A.~Volberg}
\thanks{I. Holmes is supported by National Science Foundation as an NSF Postdoc under Award No.1606270, A.~Volberg is partially supported by the NSF DMS-1600065.  This paper is  based upon work supported by the National Science Foundation under Grant No. DMS-1440140 while two of the authors, P. Ivanisvili and A. Volberg,  were in residence at the Mathematical Sciences Research Institute in Berkeley, California, during the Spring and Fall  2017 semester. }

\address{Department of Mathematics, Michigan State University, East Lansing, MI 48823, USA}
\email{holmesir@math.msu.edu \textrm{(I.\ Holmes)}}
\address{Department of Mathematics, Princeton University; MSRI; UC Irvine, CA, USA}
\email{ paata.ivanisvili@princeton.edu \textrm{(P.\ Ivanisvili)}}
\address{Department of Mathematics, Michigan State University, East Lansing, MI 48823, USA}
\email{volberg@math.msu.edu \textrm{(A.\ Volberg)}}

\makeatletter
\@namedef{subjclassname@2010}{
  \textup{2010} Mathematics Subject Classification}
\makeatother
\subjclass[2010]{42B20, 42B35, 47A30}
\keywords{}

\begin{abstract} 
In this note we give a new proof of the sharp constant $C = e^{-1/2} + \int_0^1 e^{-x^2/2}\,dx$ in the weak (1, 1) inequality for the dyadic square function. The proof makes use of two Bellman functions $\bL$ and $\bM$ related to the problem, and relies on certain relationships between $\bL$ and $\bM$, as well as the boundary values of these functions, which we find explicitly. Moreover, these Bellman functions exhibit an interesting behavior: the boundary solution for $\bM$ yields the optimal obstacle condition for $\bL$, and vice versa.
\end{abstract}
\maketitle


\section{Introduction}

In this paper we consider weak inequalities for the dyadic square function:
	$$
	S\varphi(x) := \left( \sum_{I \in \cD} (\varphi, h_I)^2 \frac{\unit_I(x)}{|I|} \right)^{1/2},
	$$
where $(\cdot, \cdot)$ denotes the usual inner product in $L^2(\bR)$, 
$\cD$ is the standard collection of dyadic intervals on the real line, 
and $\{h_I\}_{I\in\cD}$ are the ($L^2$--normalized) Haar functions:
	$$ h_I(x) := \frac{1}{\sqrt{|I|}} (\unit_{I_{-}}(x) - \unit_{I_{+}}(x)), $$
where $I_{-}$ and $I_{+}$ denote the left and right halves of $I$, respectively.
In particular, we look at localized versions of $S$, applied to compactly supported functions;
for a dyadic interval $J \in \cD$, let
	$$
	S_J^2 \varphi := \sum_{I \subset D, \; I \subseteq  J} (\varphi, h_I)^2 \frac{\unit_I}{|I|}
		= \sum_{I \subset J} |\Delta_I\varphi|^2 \unit_I,
	$$
where $\Delta_I\varphi$ denotes the martingale difference 
	$$\Delta_I\varphi := \frac{1}{2}(\La\varphi\Ra_{I_{+}} - \La\varphi\Ra_{I_{-}})(\unit_{I_{+}} - \unit_{I_{-}})
		= (\varphi, h_I) h_I. $$
Note that 
	$ S_J^2 \varphi = S [(\varphi - \La\varphi\Ra_J)\unit_J ], $
where $\La \varphi \Ra_J := \frac{1}{|J|} \int_J \varphi \,dx$,
so we may always assume that supp$(\varphi) \subset J$.

We are looking for the sharp constant $C$ in the inequality
	$$|\{x\in J: S_J^2\varphi(x) \geq \lambda\}| \leq C \frac{1}{\sqrt{\lambda}}\int_J|\varphi|,$$
for all $\varphi \in L^1(J)$ and $J \in \cD$. It was conjectured by Bollobas in \cite{Bollobas},
and it was later proved by Osekowski in \cite{Osekowski}, that this constant is
	\begin{equation}\label{E:SharpC}
	C = \Psi(1) \text{, where } \Psi(\tau) = \tau \Phi(\tau) + e^{-\tau^2/2} \text{ and } \Phi(\tau) = \int_0^{\tau} e^{-x^2/2}\,dx.
	\end{equation}
In this paper we give a new proof of this fact, using Bellman functions. We use several of them, and, roughly speaking, try to solve an obstacle problems
for the PDE that are assigned to these Bellman functions. 

As often happens in obstacle problems, the solution breaks the domain of definition to two sub-domains: 
the first one is where the solution is equal to the obstacle, and the second one, where the solution is strictly bigger 
(or strictly smaller, depending on the problem) than the obstacle, and in this domain
the corresponding PDE should be solved precisely. This can be a difficult task (we deal with fully nonlinear degenerate  elliptic equations), but the sharp constant in the underlying inequality 
can be found sometimes without fulfilling this difficult task in its entirety. This is what we will be doing. But we found out the 
sub-domains mentioned above, so we found precisely, where our Bellman functions coincide with a corresponding obstacle functions.

\medskip

B. Bollobas published \cite{Bollobas} in 1982 but apparently he initiated this problem in mid-70's, as it is said in \cite{Bollobas} that he invented
the problem to entertain professor Littlewood. In \cite{Bollobas} a certain constant and a certain special function (the Bellman function of an underlying problem)
were invented. But the fact that the constant and the function of Bollobas are precisely the best constant and the Bellman function correspondingly were proved only in 2008 by A. Osekowski in \cite{Osekowski}. We give here a different proof of this fact, and we list also some extra properties of the function found by Bollobas in \cite{Bollobas}.

\medskip

In Section \ref{S:PropsBellman}  we begin by defining the standard Bellman function for the above listed problem:
\begin{definition} \label{D:bM}
Given $f \in \bR$, $F \geq |f|$, and $\lambda > 0$, define:
	$$\bM(f, F, \lambda) := \sup \frac{1}{|J|}|\{x\in J: S_J^2\varphi(x) \geq \lambda\}|,$$
where the supremum is over all functions $\varphi$, supported in $J \in \cD$, 
such that $\La\varphi\Ra_J = f$ and $\La |\varphi|\Ra_J = F$.
We say that any such $\varphi$ is an admissible function for $\bM(f, F, \lambda)$.
\end{definition}

As shown in Proposition \ref{P:bM-Props}, this function has the expected properties, such as a main inequality and an obstacle condition.
Also as expected, we show in Theorem \ref{T:bM-LstSup} that $\bM$ is the so-called ``least supersolution'' for its main inequality.

Next, we define another Bellman function, also associated to this problem:

\begin{definition} \label{D:bL}
Given $f \in \bR$, $0 \leq p \leq 1$, and $\lambda > 0$, define:
	$$\bL (f, p, \lambda) := \inf \La |\varphi|\Ra_J,$$
where the infimum is over all functions $\varphi$, supported in $J \in \cD$,
such that
	$$\La\varphi\Ra_J = f \:\:\text{ and }\:\: \frac{1}{|J|}|\{x \in J: S_J^2\varphi(x) \geq \lambda\}| = p.$$
We say that any such $\varphi$ is an admissible function for $\bL(f, p, \lambda)$.
\end{definition}

This definition is inspired by Bollobas \cite{Bollobas} -- see Remark \ref{R:BollDef} for details of the connection to Bollobas's definition.
Being defined as an infimum, this function will have most of the mirrored properties of $\bM$ -- replace concavity with convexity for example. These are detailed in Proposition \ref{P:bL-Props}. Also mirroring $\bM$, we show in Theorem \ref{T:bL-GrtSub} that $\bL$ is the so-called ``greatest subsolution'' for its main inequality.

Using the standard methods, we obtain so-called ``obstacle conditions'' for $\bM$ and $\bL$, namely
	$$\bM(f, F, \lambda) = 1, \:\forall F \geq \sqrt{\lambda} \:\:\text{ and }\:\: 
		\bL(f, p, \lambda) = |f|, \:\forall |f| \geq \sqrt{\lambda}.$$
		
		\medskip
		
While these obstacle conditions suffice, as expected, to prove the least supersolution and greatest subsolution results, there is no reason to believe these obstacle conditions are optimal. That is, $\bM$ could very well be equal to $1$ for some points where $F < \sqrt{\lambda}$, for instance. As it turns out, we may find out  the optimal (largest) domains where obstacle condition for $\bM$ holds from information about $\bL$, and vice versa. 

\medskip

In Section \ref{S:RelsML} we explore the connections between $\bM$ and $\bL$. We show in Theorem \ref{T:MLRel} that 
$\bL(f, p, \lambda)$ is the smallest value of $F$ for which $\bM(f, F, \lambda) \geq p$, 
and $\bM(f, F, \lambda)$ is the largest value of $p$ such that $\bL(f, p, \lambda) \leq F$:
	$$ \bL(f, p, \lambda) = \inf\{F \geq |f|: \bM(f, F, \lambda) \geq p\} \:\text{ and }\: 
	\bM(f, F, \lambda) = \sup\{p\in [0, 1]: \bL(f, p, \lambda) \leq F\}.
	$$
These relationships are further improved in Proposition \ref{P:ML-Equals}, where we show that in certain domains
(ultimately the really ``interesting'' parts of the domains), we have in fact that
	$$\bM\big(f, \bL(f, p, \lambda), \lambda \big) = p \:\:\text{ and }\:\:
		\bL\big( f, \bM(f, F, \lambda), \lambda) = F.
	$$
Then the value of $\bM$ along the boundary $F = |f|$:
	$$\bM_b(f, \lambda) := \bM(f, |f|, \lambda) = \sup\{p \in [0,1]:\: \bL(f, p, \lambda) = |f|\},$$
yields the optimal obstacle condition for $\bL$, and the value of $\bL$ along the boundary $p=1$:
	$$\bL_b(f, \lambda) := \bL(f, 1, \lambda) = \inf\{F \geq |f|:\: \bM(f, F, \lambda) = 1\},$$
yields the optimal obstacle condition for $\bM$. We find $\bM_b$ and $\bL_b$ explicitly in Section \ref{S:Boundaries}.
See Section \ref{Ss:OptimalOC} and Figures \ref{fig1} and \ref{fig2} for a description of the optimal obstacle conditions for $\bM$ and $\bL$ obtained from these boundary values.

In Section \ref{S:Sharp} we give the new proof of the sharp constant in \eqref{E:SharpC}. The inequality 
	$$\bM(0, F, \lambda) \leq \frac{F}{\bL(0, 1, \lambda)} = \frac{F}{\bL_b(0, \lambda)},$$
got detailed proof in Theorem \ref{T:Ineq-Bol}.
This, combined with the relationship
	$$\bL(f, \bM(f, F, \lambda), \lambda) = F,$$
and the expression of $\bL_b$ obtained in Theorem \ref{T:Lb-Bdry}, then yields the desired sharp constant $C$,
as detailed in Corollary \ref{C:SharpC}. The proof is significantly simplified once we find, in Proposition \ref{P:Bdryf0},
the values of $\bM$ and $\bL$ at $f = 0$.

\medskip

\noindent{\bf Acknowledgements.} The authors would like to thank the anonymous referee for a very careful reading of our paper, and for the suggestions which greatly improved the work.


\section{Properties of the Bellman Functions $\bM$ and $\bL$} 
\label{S:PropsBellman}

\subsection{Basic Properties.}\label{Ss:BasicProps} 
In this section we prove the basic properties of $\bM$ and $\bL$, such as the main inequalities, convexity, monotonicity, and obstacle conditions.

\begin{prop} \label{P:bM-Props}
The Bellman function $\bM(f, F, \lambda)$ in Definition \ref{D:bM} has the following properties:
	\begin{enumerate}[\bf1)]
	\item $\bM$ is independent of the choice of interval $J \in \cD$ in its definition.
	\item \textbf{Domain and Range:} $\bM$ has convex domain
		$\Omega_{\bM} := \{(f, F, \lambda) : |f| \leq F; \: \lambda > 0\}$, 
		and $0 \leq \bM \leq 1$.
	\item $\bM$ is decreasing in $\lambda$.
	\item $\bM$ is even in $f$.
	\item {\bf Homogeneity:} 
		\begin{equation} \label{E:Hom-bM}
		\bM(f, F, \lambda) = \bM(tf, |t|F, t^2\lambda), \: \forall t \neq 0.
		\end{equation}
	\item {\bf Obstacle Condition:}
		\begin{equation} \label{E:OC-bM}
		\bM(f, F, \lambda) = 1, \: \forall \lambda \leq F^2.
		\end{equation}
	\item {\bf Main Inequality:} For all triplets $(f, F, \lambda)$, $(f_{\pm}, F_{\pm}, \lambda_{\pm})$ in the domain with
		$f = \frac{1}{2}(f_{-}+ f_{+})$, $F = \frac{1}{2}(F_{-}+ F_{+})$, and $\lambda = \min(\lambda_{-}, \lambda_{+})$,
		there holds:
			\begin{equation} \label{E:MI-bM}
			\bM\bigg(f, F, \lambda + \bigg(\frac{f_{+}-f_{-}}{2}\bigg)^2 \bigg) \geq \frac{1}{2} 
			\bigg(\bM(f_{+}, F_{+}, \lambda_{+}) + \bM(f_{-}, F_{-}, \lambda_{-}) \bigg).
			\end{equation}
	\item $\bM$ is concave in the variables $f$ and $F$.
	\item $\bM$ is maximal at $f = 0$:
		\begin{equation}\label{E:bM-max0}
		\bM(f, F, \lambda) \leq \bM(f, F, \lambda - f^2) \leq \bM(0, F, \lambda).
		\end{equation}
	\item $\bM$ is non-decreasing in $F$; $\bM$ is non-increasing in $f$ for $f \geq 0$ (and non-decreasing in $f$ for $f \leq 0$).
	\end{enumerate}
\end{prop}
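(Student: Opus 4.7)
The plan is to verify each property from the definition of $\bM$ as a supremum over admissible functions, exploiting both pointwise scaling/symmetry transformations of $\varphi$ and the standard Bellman-style ``concatenation'' construction. Properties (1)--(5) reduce to simple manipulations of the admissible class: (1) via an affine reparametrization $J \to J'$ which preserves the dyadic sub-structure of $J$ and hence the square function; (2) via the triangle inequality $|f| = |\La\varphi\Ra_J| \leq \La|\varphi|\Ra_J = F$, with the range bound $0 \leq \bM \leq 1$ immediate from the definition; (3) from the nesting of the level sets $\{S_J^2\varphi \geq \lambda\}$ in $\lambda$; and (4), (5) via the substitutions $\varphi \mapsto -\varphi$ and $\varphi \mapsto t\varphi$, using the homogeneity $S_J^2(t\varphi) = t^2 S_J^2\varphi$.

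The heart of the argument is the main inequality (7), from which the remaining properties will follow. Given admissible $\varphi_\pm$ for $(f_\pm, F_\pm, \lambda_\pm)$ on $J_\pm$, I would form $\varphi := \varphi_+ \unit_{J_+} + \varphi_- \unit_{J_-}$ on $J$, which satisfies $\La\varphi\Ra_J = (f_++f_-)/2 = f$ and $\La|\varphi|\Ra_J = (F_++F_-)/2 = F$, and decompose the square function as
\[
S_J^2\varphi = S_{J_+}^2\varphi_+ + S_{J_-}^2\varphi_- + \Big(\tfrac{f_+-f_-}{2}\Big)^2 \unit_J,
\]
isolating the single extra Haar coefficient at scale $J$. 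The shifted threshold $\lambda + ((f_+-f_-)/2)^2$ is chosen precisely to absorb this extra term on each half, and the choice $\lambda = \min(\lambda_+, \lambda_-)$ ensures that the level set of $S_{J_\pm}^2\varphi_\pm$ at height $\lambda$ contains the level set at height $\lambda_\pm$; normalizing by $|J| = 2|J_\pm|$ and taking suprema in $\varphi_\pm$ then yields (7). For the obstacle condition (6), I exhibit the explicit two-step extremizer
\[
\varphi_\ast := (f+F)\unit_{J_-} + (f-F)\unit_{J_+},
\]
whose only nonzero Haar coefficient produces $S_J^2\varphi_\ast \equiv F^2$ on $J$; admissibility ($\La\varphi_\ast\Ra_J = f$, $\La|\varphi_\ast|\Ra_J = F$) holds because $F \geq |f|$ forces $f-F \leq 0 \leq f+F$, so the level set $\{S_J^2\varphi_\ast \geq \lambda\}$ is all of $J$ whenever $F^2 \geq \lambda$.

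The remaining properties follow from (6) and (7). For (8), specializing (7) to $\lambda_+ = \lambda_- = \lambda$ and invoking (3) to discard the $((f_+-f_-)/2)^2$ shift gives midpoint concavity in $(f,F)$, which upgrades via boundedness to concavity and continuity on the interior of $\Omega_\bM$. For (9), take $f_\pm = \pm f$, $F_\pm = F$, $\lambda_\pm = \lambda - f^2$ in (7); the identity $(\lambda - f^2) + f^2 = \lambda$ together with evenness (4) yields $\bM(f,F,\lambda-f^2) \leq \bM(0,F,\lambda)$, while $\bM(f,F,\lambda) \leq \bM(f,F,\lambda-f^2)$ is immediate from (3). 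For (10), monotonicity in $F$ follows from (8) and (6): on $[\sqrt{\lambda}, \infty)$ the concave slice $F \mapsto \bM(f,F,\lambda)$ equals $1$, which together with concavity on $[|f|, \sqrt{\lambda}]$ forces non-decreasing behavior (a concave function cannot strictly decrease before reaching its global maximum); monotonicity in $|f|$ follows from concavity in $f$ plus evenness, which places the maximum at $f=0$. The main obstacle I anticipate is the bookkeeping in (7): verifying that the extra scale-$J$ Haar term combines correctly with the $\lambda$-shift, and that $\min(\lambda_+,\lambda_-)$---rather than $\max$---is the correct choice for nesting of level sets. Once (7) is set up correctly, the rest is essentially formal.
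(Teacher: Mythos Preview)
Your proposal is correct and follows essentially the same route as the paper's proof: the same concatenation argument for (7), the same two-valued test function for (6), and the same deductions of (8)--(10) from (7) combined with (3), (4), and (6). The only minor difference is that for (8) you upgrade midpoint concavity to concavity via boundedness of $\bM$, whereas the paper invokes measurability and cites the Blumberg--Sierpi\'nski theorem; both are valid, and your route is arguably a touch cleaner here since $0\le\bM\le 1$ is immediate.
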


\begin{proof}
\textbf{1)} follows by the standard considerations. Properties {\bf 2)} and {\bf 3)} are obvious.
Property {\bf 4)} follows since $\varphi$ is admissible for $\bM(f, F, \lambda)$ if and only if $-\varphi$ is admissible for $\bM(-f, F, \lambda)$,
and in this case $S_J^2\varphi = S_J^2(-\varphi)$. 
To see homogeneity, {\bf 5)}, note that $\varphi$ is admissible for $\bM(f, F, \lambda)$ if and only if $t \varphi$ is admissible for
$\bM(tf, |t|F, t^2\lambda)$, and in this case $S_J^2(t \varphi) = t^2 S_J^2\varphi$.

Next, we prove the obstacle condition, {\bf 6)} Given a point $(f, F, \lambda)$ in the domain, consider the function
$\varphi = f \unit_J + F\sqrt{|J|}h_J$. Then $\La \varphi \Ra_J = f$, $S_J^2\varphi = F^2\unit_J$, and
$\La |\varphi|\Ra_J = F$. So $\varphi$ is admissible for $\bM(f, F, \lambda)$, and if $\lambda \leq F^2$,
$\{x\in J: S_J^2\varphi(x) \geq \lambda\} = J$, so $\bM(f, F, \lambda) = 1$.

To prove the main inequality {\bf 7)}, let $J \in \cD$ be a dyadic interval, and let $\varphi_{\pm}$ be functions supported on $J_{\pm}$, admissible for
$\bM(f_{\pm}, F_{\pm}, \lambda_{\pm})$, and which give the supremum up to some $\epsilon > 0$:
	$$\text{supp}(\varphi_{\pm}) \subset J_{\pm}; \:\: \La \varphi_{\pm}\Ra_{J_{\pm}} = f_{\pm}; \:\:
		\La |\varphi_{\pm}|\Ra_{J_{\pm}} = F_{\pm}, $$
and
	$$\frac{1}{|J_{\pm}|} \left|\{ x \in J_{\pm}: S_{J_{\pm}}^2\varphi_{\pm} \geq \lambda_{\pm} \}\right| 
		> \bM(f_{\pm}, F_{\pm}, \lambda_{\pm}) - \epsilon.$$
Now define $\varphi$ on $J$ by concatenation: $\varphi := \varphi_{-}\unit_{J_{-}} + \varphi_{+}\unit_{J_{+}}$.
Then $\La \varphi\Ra_J = f$ and $\La |\varphi|\Ra_J = F$, so $\varphi$ is admissible for $\bM(f, F, \lambda)$.
Moreover:
	\begin{align*}
	S_J^2\varphi &= |\Delta_J\varphi|^2\unit_J + \sum_{I \subset J_{-}} |\Delta_I \varphi_{-}|^2 \unit_I
		+ \sum_{I \subset J_{+}} |\Delta_I\varphi_{+}|^2 \unit_I \\
	&= \frac{1}{4}(f_{+} - f_{-})^2 \unit_J + S_{J_{-}}^2\varphi_{-} + S_{J_{+}}^2\varphi_{+}.
	\end{align*}
Then:
	\begin{align*}
	\bM(f, F, \lambda + \frac{1}{4}(f_{+} - f_{-})^2) &\geq 
		\frac{1}{|J|} \left|\{ x \in J: S_{J_{-}}^2\varphi_{-}(x) + S_{J_{+}}^2\varphi_{+}(x) \geq \lambda \}\right| \\
	&\geq \frac{1}{2|J_{-}|} \left|\{ x \in J_{-} : S_{J_{-}}^2\varphi_{-}(x) \geq \lambda_{-} \}\right| +
		\frac{1}{2|J_{+}|} \left|\{ x \in J_{+} : S_{J_{+}}^2\varphi_{+}(x) \geq \lambda_{+} \}\right| \\
	&> \frac{1}{2} \left( \bM(f_{-}, F_{-}, \lambda_{-}) + \bM(f_{+}, F_{+}, \lambda_{+}) \right) - \epsilon.
	\end{align*}
Since this holds for all $\epsilon > 0$, the main inequality \eqref{E:MI-bM} is proved.

To prove {\bf 8)}, rewrite the main inequality in a more convenient form:
	\begin{equation}\label{E:MI-bM2}
	\frac{1}{2}\bigg(\bM(f+a, F+b, \lambda) + \bM(f-a, F-b, \lambda)\bigg) \leq \bM(f, F, \lambda + a^2) \leq \bM(f, F, \lambda),
	\end{equation}
for all $a \in \bR$ and $|b| \leq F$. We immediately obtain that $\bM$ is \textit{midpoint concave}
in the variable $f, F$. Since $\bM$ is measurable, this is enough to show that $\bM$ is concave
in $F$ and $f$ (see page 60 in \cite{ConvexBook}, and the references therein \cite{Blumberg, Sierpinski}).

For {\bf 9)}, take $f = 0$ and $b = 0$ in \eqref{E:MI-bM2}:
	$$\bM(0, F, \lambda + a^2) \geq \frac{1}{2}\bigg(\bM(a, F, \lambda) + \bM(-a, F, \lambda)\bigg) = \bM(a, F, \lambda),$$
where the last equality follows because $\bM$ is even in the first variable.

Finally, to see {\bf 10)} note that by the obstacle condition \eqref{E:OC-bM}, $\bM(f, \cdot, \lambda)$ is concave and has a maximum at $F = \sqrt{\lambda}$, and is constant for $F \geq \sqrt{\lambda}$. Similarly, $\bM(\cdot, F, \lambda)$ is even, concave, and by \eqref{E:bM-max0} has a maximum at $f = 0$.
\end{proof}

\noindent Note that if $F=0$, the only admissible function is $\varphi = 0$ a.e. so
	\begin{equation}\label{E:bM-F0}
	\bM(0, 0, \lambda) = 0, \: \forall \lambda >0.
	\end{equation}


\begin{prop} \label{P:bL-Props}
The Bellman function $\bL(f, p, \lambda)$ in Definition \ref{D:bL} has the following properties:
	\begin{enumerate}[\bf1)]
	\item $\bL$ is independent of the choice of interval $J \in \cD$ in its definition.
	\item {\bf Domain and Range:} $\bL$ has convex domain $\Omega_{\bL} := \{(f, p, \lambda): f \in \bR; p \in [0,1], \lambda > 0\}$.
	As for the range:
		\begin{equation}\label{E:Range-bL}
		|f| \leq \bL(f, p, \lambda) \leq (1-p)|f| + p\max(|f|, \sqrt{\lambda}).
		\end{equation}
	\item $\bL$ is increasing in $\lambda$.
	\item $\bL$ is even in $f$.
	\item {\bf Homogeneity:} 
		\begin{equation}\label{E:Hom-bL}
		\bL(tf, p, t^2\lambda) = |t| \bL(f, p, \lambda), \: \forall t \neq 0.
		\end{equation}
	\item {\bf Obstacle Condition:}
		\begin{equation} \label{E:OC-bL}
		\bL(f, p, \lambda) = |f|, \: \forall |f| \geq \sqrt{\lambda}.
		\end{equation}
	\item {\bf Main Inequality:} For all triplets $(f, p, \lambda)$, $(f_{\pm}, p_{\pm}, \lambda_{\pm})$ in the domain with
		$f = \frac{1}{2}(f_{-}+ f_{+})$, $p = \frac{1}{2}(p_{-}+ p_{+})$, and $\lambda = \min(\lambda_{-}, \lambda_{+})$,
		there holds:
			\begin{equation} \label{E:MI-bL}
			\bL\bigg(f, p, \lambda + \bigg(\frac{f_{+}-f_{-}}{2}\bigg)^2 \bigg) \leq \frac{1}{2} 
			\bigg(\bL(f_{+}, p_{+}, \lambda_{+}) + \bL(f_{-}, p_{-}, \lambda_{-}) \bigg).
			\end{equation}
	\item $\bL$ is convex in the variables $f, p$.
	\item $\bL$ is minimal at $f = 0$:
		\begin{equation} \label{E:bL-min0}
		\bL(0, p, \lambda) \leq \bL(0, p, \lambda + f^2) \leq \bL(f, p, \lambda).
		\end{equation}
	\item $\bL$ is non-decreasing in $p$; $\bL$ is non-decreasing in $f$ for $f \geq 0$ (and non-increasing in $f$ for $f \leq 0$).
	\end{enumerate}
\end{prop}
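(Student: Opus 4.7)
The proof follows the blueprint of Proposition \ref{P:bM-Props} with the natural reversals: $\inf$ in place of $\sup$, convexity in place of concavity, and the obstacle level $|f|$ in place of $1$. I would handle the properties in the same order as there. Properties \textbf{1), 4), 5)} are immediate from transforming admissible functions: $\varphi$ admissible for $\bL(f,p,\lambda)$ if and only if $-\varphi$ is admissible for $\bL(-f,p,\lambda)$, and $t\varphi$ is admissible for $\bL(tf,p,t^2\lambda)$ with $\langle|t\varphi|\rangle = |t|\langle|\varphi|\rangle$; independence of $J$ uses translation invariance. Property \textbf{3)} (monotonicity in $\lambda$) follows since increasing $\lambda$ restricts the level set, which the infimum must compensate for by enlarging $\langle|\varphi|\rangle$.

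For the range inequality \eqref{E:Range-bL} in property \textbf{2)}, the lower bound $|f| \leq \bL$ is Jensen's inequality $|\langle\varphi\rangle| \leq \langle|\varphi|\rangle$. The upper bound is supplied by an explicit test function: split $J$ into a dyadic subset $J'$ of measure $p|J|$ (or pass through dyadic rationals and use continuity) and its complement; on $J \setminus J'$ take $\varphi = f$ constant, and on $J'$ take a $\pm$ oscillation of the form used in the $\bM$-proof (shifted by a constant) chosen so that $S_J^2\varphi \geq \lambda$ there, that $\langle\varphi\rangle_J = f$, and that $\langle|\varphi|\rangle$ is minimized to $(1-p)|f| + p\max(|f|,\sqrt{\lambda})$. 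The obstacle condition \textbf{6)} then falls out: when $|f|\geq\sqrt\lambda$, the two bounds coincide.

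For the main inequality \textbf{7)}, concatenate near-optimal admissible $\varphi_\pm$ for $\bL(f_\pm, p_\pm, \lambda_\pm)$ to form $\varphi := \varphi_-\unit_{J_-} + \varphi_+\unit_{J_+}$. Exactly as in the proof of Proposition \ref{P:bM-Props},
\[
S_J^2\varphi = \tfrac{1}{4}(f_+-f_-)^2 \unit_J + S_{J_-}^2\varphi_- + S_{J_+}^2\varphi_+, \qquad \langle|\varphi|\rangle_J = \tfrac{1}{2}\bigl(\langle|\varphi_-|\rangle_{J_-} + \langle|\varphi_+|\rangle_{J_+}\bigr).
\]
With $\lambda = \min(\lambda_-,\lambda_+)$, the level set $\{S_J^2\varphi \geq \lambda + \tfrac14(f_+-f_-)^2\}$ has measure \emph{at least} $p|J|$, so $\varphi$ is admissible for $\bL(f, p', \lambda + \tfrac14(f_+-f_-)^2)$ for some $p'\geq p$; monotonicity in $p$ (property \textbf{10)}) then yields the desired bound on $\bL(f,p,\cdots)$. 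Convexity and continuity in $f,p$ (property \textbf{8)}) come from the main inequality specialized to $\lambda_\pm = \lambda$ (giving midpoint convexity in each variable) together with the Sierpi\'nski-type measurability argument cited in the proof of Proposition \ref{P:bM-Props}. Minimality at $f=0$ (property \textbf{9)}) is obtained by applying the main inequality with $f_\pm = \pm a$, $p_\pm = p$, $\lambda_\pm = \lambda$, and using the evenness of $\bL$ in $f$.

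The main obstacle is the apparent circularity between the main inequality (which I used monotonicity in $p$ to derive) and convexity (which normally yields monotonicity). I would resolve this by proving property \textbf{10)} \emph{before} \textbf{7)}, directly from the definition: given $\varphi$ near-optimal for $\bL(f,p',\lambda)$ with $p<p'$, pick a dyadic interval $K \subset \{S_J^2\varphi \geq \lambda\}$ with $|K|$ chosen so that replacing $\varphi$ on $K$ by its mean $\langle\varphi\rangle_K$ reduces the level set to exactly $p|J|$; this flattening preserves $\langle\varphi\rangle_J = f$ and cannot increase $\langle|\varphi|\rangle$ by Jensen, proving $\bL(f,p,\lambda)\leq\bL(f,p',\lambda)$. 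Monotonicity in $f$ follows analogously using the minimum at $f=0$ once convexity in $f$ is known. With this ordering, all ten properties fall into place as in the $\bM$ case.
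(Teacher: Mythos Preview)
Your overall outline mirrors the paper's, but the two places where you diverge from it are exactly the two places where you have gaps.

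First, your one-line justification of property \textbf{3)} does not work. Because the definition of $\bL$ requires $\frac{1}{|J|}|\{S_J^2\varphi\ge\lambda\}|=p$ \emph{exactly}, changing $\lambda$ changes the admissible class in a nontrivial way; you cannot simply say the infimum ``must compensate.'' The paper explicitly flags this as ``not so obvious'' and derives it only \emph{after} establishing monotonicity in $p$: if $\lambda_1\le\lambda_2$ and $\varphi$ is admissible for $\bL(f,p,\lambda_2)$, then $\varphi$ is admissible for $\bL(f,q,\lambda_1)$ with $q\ge p$, and monotonicity in $p$ gives $\bL(f,p,\lambda_1)\le\bL(f,q,\lambda_1)\le\langle|\varphi|\rangle_J$.

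Second, your direct proof of monotonicity in $p$ by flattening is genuinely broken. Replacing $\varphi$ on a dyadic $K\subset\{S_J^2\varphi\ge\lambda\}$ by its mean zeroes out the Haar coefficients indexed by $I\subseteq K$, but $S_J^2\varphi$ on $K$ still receives the ancestor contribution $\sum_{I\supsetneq K}|\Delta_I\varphi|^2$, which may already be $\ge\lambda$; in that case flattening does not remove $K$ from the level set at all. (Take $\varphi=f\unit_J+\sqrt{\lambda}\sqrt{|J|}h_J$: the level set is all of $J$, and flattening on any proper subinterval changes nothing.) Moreover, even when flattening does shrink the level set, there is no reason a single dyadic $K$ of exactly the right size exists.

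The paper's resolution of the circularity you correctly identified is different and cleaner: prove the \emph{weak} main inequality \eqref{E:MI-bL2} first, with equal $\lambda$ on the right-hand side. In that case the concatenated $\varphi$ has level set of measure \emph{exactly} $p|J|$ (no inequality, hence no need for monotonicity in $p$). This weak form already gives midpoint convexity in $p$, hence convexity and continuity in $p$. With continuity in hand, your test-function construction yields the full range estimate \eqref{E:Range-bL}, whence $\bL(f,0,\lambda)=|f|$; since a convex function on $[0,1]$ attaining its minimum at $0$ is nondecreasing, monotonicity in $p$ follows. Monotonicity in $\lambda$ then comes as above, and only then the full main inequality, convexity in $f$, minimality at $f=0$, and monotonicity in $f$.
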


\begin{proof}
The proofs of properties {\bf 1)}, {\bf 4)} and {\bf 5)} are similar to those for $\bM$. 
It is also straightforward to prove
	\begin{equation}\label{E:MI-bL2}
	\bL(f, p, \lambda ) \leq \bL(f, p, \lambda + a^2) \leq \frac{1}{2}\bigg(\bL(f+a, p+b, \lambda) + \bL(f-a, p-b, \lambda)\bigg),
	\end{equation}
a weaker form of \eqref{E:MI-bL} -- note that we don't know yet that $\bL$ is increasing in $\lambda$, a property that is not so obvious in this case. We may see now however that $\bL$ is convex  in $p$, by letting $a=0$ in \eqref{E:MI-bL2}.

Next, we prove the range condition {\bf 2)} \eqref{E:Range-bL}, and note that in this case the obstacle condition {\bf 6)} \eqref{E:OC-bL}
follows directly from the range condition, since
	$$|f| \leq \bL(f, p, \lambda) \leq (1-p)|f| + p\max(|f|, \sqrt{\lambda}) \leq \max(|f|, \sqrt{\lambda}).$$
The first inequality is obvious, as any function $\varphi$ admissible for
$\bL(f, p, \lambda)$ satisfies $\La|\varphi|\Ra_J \geq |\La\varphi\Ra_J| = |f|$.
We now prove the second inequality, and begin with some simple examples. When $p = 1$, consider the function
$\varphi = f\unit_J + \sqrt{\lambda}\sqrt{|J|}h_J$. Then $S_J^2\varphi = \lambda \unit_J$, so $\varphi$ is admissible for 
$\bL(f, 1, \lambda)$, and then:
	$$\bL(f, 1, \lambda) \leq \La|\varphi|\Ra_J = \frac{1}{2}|f + \sqrt{\lambda}| + \frac{1}{2}|f - \sqrt{\lambda}|
	= \max\{|f|, \sqrt{\lambda}\}.$$
If $p = \frac{1}{2}$, then consider for example the function $\varphi = f\unit_J + \sqrt{\lambda}\sqrt{|J_{-}|}h_{J_{-}}$. Then
$S_J^2\varphi = \lambda\unit_{J_{-}}$, so $\varphi$ is admissible for $\bL(f, 1/2, \lambda)$, and then:
	$$\bL(f, 1/2, \lambda) \leq \La |\varphi|\Ra_J = \frac{1}{4}|f + \sqrt{\lambda}| + \frac{1}{4}|f - \sqrt{\lambda}|
	+ \frac{1}{2}|f| = \frac{1}{2}|f| + \frac{1}{2} \max\{|f|, \sqrt{\lambda}\}.$$
	
Now suppose that $p \in (0, 1)$ is a \textit{dyadic rational}, that is $p = \frac{k}{2^N}$ for some integers $N \geq 1$
and $1\leq k \leq 2^N - 1$. On some dyadic interval $J$, let $\mathcal{I}$ denote any collection of $k$ subintervals in the
$N^{\text{th}}$ generation $J_{(N)}$ of dyadic descendants of $J$, and let
	$$\varphi = f\unit_J + \sqrt{\lambda} \sum_{I \in \cI} \sqrt{|I|}h_I.$$
Then $S_J^2\varphi = \lambda\unit_{\{\cup I: I \in\cI\}}$, so
	$$\frac{1}{|J|}|\{x \in J: S_J^2\varphi(x)\geq\lambda\}| = \frac{|\bigcup_{I \in \cI}I|}{|J|} = \frac{k}{2^N} = p,$$
and $\bL(f, p, \lambda) \leq \La|\varphi|\Ra_J$. Now for every $I \in \cI$, on $I_{\pm}$, $\varphi = f \pm \sqrt{\lambda}$, 
and $\varphi = f$ off $\cup_{I\in\cI}I$. So
	$$\La|\varphi|\Ra_J = \frac{1}{|J|}\bigg(\max\{|f|, \sqrt{\lambda}\} \sum_{I\in\cI}|I| + 
	|f| |J \setminus \cup_{I\in\cI}I| \bigg) = (1-p)|f| + p \max\{|f|, \sqrt{\lambda}\}.$$ 
Therefore the second inequality in \eqref{E:Range-bL} holds for all dyadic rationals $p \in (0, 1)$, the result follows. 

Also note that, taking $p = 0$ in \eqref{E:Range-bL}, we see that
	\begin{equation}\label{E:bL-p0}
	\bL(f, 0, \lambda) = |f|.
	\end{equation}
Thus $\bL(f, \cdot, \lambda)$ is convex in $p \in [0,1]$ and has a minimum at $p=0$, so $\bL$ is non-decreasing in $p$.
In turn, this allows us to prove property {\bf 3)}, that $\bL$ is non-decreasing in $\lambda$: suppose $\lambda_1 \leq \lambda_2$
and let $\varphi$ be admissible for $\bL(f, p, \lambda_2)$. Then $\La \varphi\Ra_J = f$ and
	$$p = \frac{1}{|J|}|\{x\in J: S_J^2\varphi(x) \geq \lambda_2\}| \leq \frac{1}{|J|}|\{x\in J: S_J^2\varphi(x) \geq \lambda_1\}| =: q.$$
So $\varphi$ is also admissible for $\bL(f, q,\lambda_1)$, where $q \geq p$, which means
	$$\La|\varphi|\Ra_J \geq \bL(f, q, \lambda_1) \geq \bL(f, p, \lambda_1).$$
Since this holds for all $\varphi$ admissible for $\bL(f, p, \lambda_2)$, we have $\bL(f, p, \lambda_2) \geq \bL(f, p, \lambda_1)$.

Having the desired monotonicity in $\lambda$ then gives the full form of the main inequality {\bf 7)} \eqref{E:MI-bL}, as well as
$\bL(f, p, \lambda) \leq \bL(f, p, \lambda+a^2)$. So  \eqref{E:MI-bL2}  gives us convexity in $f, p$ -- so property {\bf 8)} is also proved.
Let $f = 0$ and $b = 0$ in \eqref{E:MI-bL2} and we obtain {\bf 9)}, minimality of $\bL$ at $f = 0$.
Finally, we may then finish proving {\bf 10)}: since $\bL(\cdot, p,\lambda)$ is even, convex, and minimal at $f = 0$,
the claimed monotonicity in $f$ follows.
	
\end{proof}

\subsection{$\bM$ is the Least Supersolution.} \label{Ss:bM-LstSup}
Consider the main inequality for $\bM$ in more generality:
	\begin{equation}\label{E:MI-m}
	m(f, F, \lambda + a^2) \geq \frac{1}{2} \bigg(m(f+a, F+b, \lambda) + m(f-a, F-b, \lambda)\bigg).
	\end{equation}
	
\begin{definition}\label{D:Supersolution}	
We say that a function $m(f, F, \lambda)$ defined on $\Omega_{\bM}$ is a
\textit{supersolution} of the main inequality \eqref{E:MI-m} provided that
$m$ is non-negative, continuous, and satisfies 
	\begin{enumerate}[1)]
	\item The main inequality \eqref{E:MI-m};
	\item The obstacle condition
			$m(f, F, \lambda) = 1$, whenever $\lambda \leq F^2$.
	\end{enumerate}
\end{definition}

\begin{theorem} \label{T:bM-LstSup}
If $m$ is any supersolution as defined above, then $\bM \leq m$.
\end{theorem}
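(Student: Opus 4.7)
The plan is a standard Bellman induction: run the supersolution $m$ along the dyadic tree associated with an admissible function $\varphi$ for $\bM(f,F,\lam)$, and extract the desired inequality from the obstacle condition at the bottom. Concretely, I aim to show
$$ \frac{1}{|J|}\,\big|\{x \in J : S_J^2\varphi(x) \geq \lam\}\big| \leq m(f,F,\lam) $$
for every admissible $\varphi$; taking the supremum then gives $\bM(f,F,\lam) \leq m(f,F,\lam)$. A preliminary reduction replaces $\varphi$ by a \emph{simple} function, constant on the generation-$N$ dyadic descendants of $J$ for some $N$, using continuity of $m$ together with the fact that truncating the Haar expansion of $\varphi$ yields $S_J^2\varphi_N \uparrow S_J^2\varphi$ and hence $|\{S_J^2\varphi_N \geq \lam\}| \to |\{S_J^2\varphi \geq \lam\}|$.

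For $x \in J$ and $0 \leq n \leq N$, let $I_n(x)$ denote the generation-$n$ dyadic descendant of $J$ containing $x$, and set
$$ f_n(x) := \La\varphi\Ra_{I_n(x)}, \qquad F_n(x) := \La|\varphi|\Ra_{I_n(x)}, \qquad \sigma_n(x) := \sum_{k=0}^{n-1} \frac{|(\varphi,h_{I_k(x)})|^2}{|I_k(x)|}. $$
Then $\sigma_n$ is piecewise constant on generation-$n$ intervals, non-decreasing, and $\sigma_N = S_J^2\varphi$ since $\varphi$ is simple. Writing $\mu_n := \lam - \sigma_n$, introduce the stopping time
$$ \tau(x) := \min\{\, 0 \leq n \leq N : \mu_n(x) \leq F_n(x)^2 \,\}, $$
with $\tau(x) := N$ if the set is empty. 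On $\{S_J^2\varphi \geq \lam\}$ one has $\mu_N \leq 0 \leq F_N^2$, so $\tau \leq N$ there.

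The core calculation iterates the main inequality \eqref{E:MI-m}. On a generation-$n$ interval $I$ with $n < \tau$, applying \eqref{E:MI-m} with $a = \tfrac{1}{2}(f_{I_{+}} - f_{I_{-}})$ and $b = \tfrac{1}{2}(F_{I_{+}} - F_{I_{-}})$, and noting $\mu_{n+1} = \mu_n - a^2$ on $I$, yields
$$ m(f_n, F_n, \mu_n)\big|_I \geq \frac{1}{|I|}\int_I m(f_{n+1}, F_{n+1}, \mu_{n+1})\,dx. $$
Since the stopped process is constant in $n$ on $\{n \geq \tau\}$, the map $n \mapsto \int_J m(f_{n\wedge\tau}, F_{n\wedge\tau}, \mu_{n\wedge\tau})\,dx$ is non-increasing, starting at $|J|\,m(f, F, \lam)$ when $n=0$ and ending at $\int_J m(f_\tau, F_\tau, \mu_\tau)\,dx$ when $n = N$. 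By definition of $\tau$, the obstacle condition in Definition~\ref{D:Supersolution} gives $m(f_\tau, F_\tau, \mu_\tau) = 1$ wherever $\tau \leq N$; since $\{S_J^2\varphi \geq \lam\} \subseteq \{\tau \leq N\}$, the last integral dominates $|\{S_J^2\varphi \geq \lam\}|$, completing the chain.

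The main obstacle will be a domain technicality: $m$ is only defined for $\lam > 0$, so the stopped triple $(f_\tau, F_\tau, \mu_\tau)$ must stay inside $\Omega_{\bM}$. I plan to handle this by a harmless perturbation, replacing $\lam$ by $\lam + \eps$ so that $\mu_n > 0$ strictly until stopping, and then letting $\eps \to 0^{+}$ at the end by continuity of $m$. Modulo this and the initial density step, everything reduces to dyadic bookkeeping.
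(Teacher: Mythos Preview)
Your proposal is correct and follows essentially the same Bellman induction as the paper: reduce to finite Haar expansions by continuity, then push $m$ down the dyadic tree via the main inequality, invoking the obstacle condition where it fires. The paper phrases the descent iteratively (branching on whether $\lambda_I \leq \Delta_I^2\varphi$), while you package the same thing as a stopping time; the content is identical.

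One remark: the domain technicality you flag as ``the main obstacle'' is in fact not an obstacle at all, and the $\eps$-perturbation is unnecessary. Your stopping rule already keeps $\mu_{n\wedge\tau} > 0$ throughout. Indeed, for $n < \tau$ you have $\mu_n > F_n^2$; since the jump satisfies $a^2 = |\Delta_{I_n}\varphi|^2 \leq F_n^2$, it follows that $\mu_{n+1} = \mu_n - a^2 > 0$. Thus $\mu_\tau > 0$ (the case $\tau = 0$ being $\mu_0 = \lambda > 0$), and the stopped triple always lies in $\Omega_{\bM}$. This is exactly how the paper avoids the issue as well, though stated in different language.

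A second, very minor point (shared with the paper): the claim that $|\{S_J^2\varphi_N \geq \lambda\}| \to |\{S_J^2\varphi \geq \lambda\}|$ is not literally true for every $\lambda$, since the monotone limit of the sets is $\{S_J^2\varphi \geq \lambda\}$ minus a subset of $\{S_J^2\varphi = \lambda\}$. One gets $\geq |\{S_J^2\varphi > \lambda\}|$ directly, and the closed sublevel set follows by approximating $\lambda$ from below and using continuity of $m$. This is routine and does not affect the argument.
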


\begin{proof}
Obviously, it suffices to show that if $m$ is a supersolution, then
	\begin{equation}\label{E:bM-LstSup-Star}
	|J| m(f, F, \lambda) \geq |\{x \in J: S_J^2\varphi(x) \geq \lambda\}|,
	\end{equation}
for any function $\varphi$ supported in $J\in\cD$ with $\La\varphi\Ra_J = f$ and $\La|\varphi|\Ra_J = F$.

	\begin{remark} \label{R:ClassicHaar}
	Some caution is needed when working in $L^1$, so we recall here the \textit{classical Haar system} on $[0,1)$.
	Consider $J = [0,1)$ and arrange its dyadic subintervals (and hence also their corresponding Haar functions) in
	lexicographical order:
		$$J_n := \left[\frac{j-1}{2^k}, \frac{j}{2^k} \right), \:\:
		\forall n = 2^k + j - 1, \: k\geq 0, \: 1\leq j \leq 2^k. $$
	So
		$$ J_1 = J; \:\: J_2 = J_{-}, J_3 = J_{+}; \:\: J_{4} = J_{--}, J_5 = J_{-+}, \ldots, $$
	where $I_{-}$ and $I_{+}$ denote the left and right halves of a dyadic interval $I$, respectively.
	The classical result of Haar states that for every $\varphi \in L^p[0,1)$, $1\leq p <\infty$, the Haar series
		$$\varphi_N(x) := \unit_{[0,1)}(x)\int_0^1\varphi + \sum_{k=1}^N (\varphi, h_{J_{k}}) h_{J_{k}}(x) $$
	converges to $\varphi$ in $L^p[0,1)$ and almost everywhere.
	The reason for caution in our problem is that, while for $p>1$ the Haar functions form an unconditional basis for 
	$L^p[0,1)$, the most we can say for $p=1$ is that $\{\unit_{[0,1)}\}\cup\{h_{J_k}\}_{k\geq 1}$ is a Schauder basis.
	That is, we may rearrange the Haar series in such a way that it becomes divergent.
	
	This result transfers in an obvious way to any dyadic interval $J \in \cD$, and we use the notation
		$$ \{h_{J_k}\}_{k\geq 1} $$
	whenever we must keep track of the ordering of the subintervals of $J$. We say this is the Haar system adapted to $J$.
	\end{remark}

Returning to our proof, the first key observation is that it suffices to prove \eqref{E:bM-LstSup-Star} for functions $\varphi$ with 
\textit{finite Haar expansion}. To see this, let $\varphi$ with supp$(\varphi) \subset J$, $\La\varphi\Ra_J = f$ and $\La|\varphi|\Ra_J = F$. Then the Haar series
	$$\varphi_N := f \unit_J + \sum_{k=1}^N (\varphi, h_{J_k}) h_{J_k} $$
converges to $\varphi$ in $L^1(J)$ and almost everywhere. Moreover, $\La\varphi_N\Ra_J = f$ and $F_N := \La |\varphi_N|\Ra_J \rightarrow F$ as $N \rightarrow \infty$. Denote now the sets:
	$$E_{N,\lambda} := \{x\in J: S_J^2\varphi_N(x)\geq\lambda\}
	\:\:\:\text{ and }\:\:\: E_{\lambda} := \{x\in J: S_J^2\varphi(x) \geq \lambda\}.$$

We must be a little careful now, since it is not necessarily true that $|E_{N,\lambda}| \rightarrow |E_{\lambda}|$ as $N\rightarrow\infty$.
So let $\epsilon>0$ and use \eqref{E:bM-LstSup-Star} with $\lambda-\epsilon$ instead (and with $\varphi_N$ instead of $\varphi$), to obtain
	\begin{equation}
	\label{E:bM-LstSup-StarEps}
	|J| m(f, F_N, \lambda-\epsilon) \geq |\{x \in J: S_J^2\varphi_N(x) \geq \lambda-\epsilon\}| = |E_{N, \lambda-\epsilon}|,
	\end{equation}
for all $N$. Here we assumed  and used  \eqref{E:bM-LstSup-Star} for functions with 
\textit{finite Haar expansion}.
Now, since $S^2_J\varphi = \lim_{N\rightarrow\infty}S_J^2\varphi_N$ a.e. we have
	\begin{equation}
	\label{E:bM-LstSup-Star2}
	\bigcup_{N=1}^{\infty} E_{N,\lambda-\epsilon} \supseteq E_{\lambda} = \{x\in J: S_J^2\varphi(x) \geq\lambda\} \text{ a.e.}
	\end{equation}
(for almost all $x\in E_{\lambda}$, there will be a level $N_x$ after which $x\in E_{N,\lambda-\epsilon}$ for all $N \geq N_x$).
Taking $\limsup$ in \eqref{E:bM-LstSup-StarEps} and \eqref{E:bM-LstSup-Star2} we have then
	$$|J|m(f, F, \lambda-\epsilon) \geq \limsup_{N\rightarrow\infty} |E_{N,\lambda-\epsilon}| \geq |E_{\lambda}|.$$
Since this holds for all $\epsilon > 0$ and $m$ is continuous, we obtain exactly the desired conclusion  \eqref{E:bM-LstSup-Star} for general functions.

So now suppose supp$(\varphi) \subset J$, $\La\varphi\Ra_J = f$, $\La|\varphi|\Ra_J = F$. 
And also suppose that $\varphi$  has a finite Haar expansion. The goal is to show that if $m$ is any supersolution,
	$$|J| m(f, F, \lambda) \geq |E| \text{, where } E:= \{x\in J : S_J^2\varphi(x) \geq \lambda\}. $$
Suppose further that there is some dyadic level $N>0$ such that
	$$\varphi = f\unit_J + \sum_{\substack{I\subset J \\ |I| \geq |J| 2^{-N}}} (\varphi, h_I) h_I.$$
Remark that $S_J^2\varphi$ is constant on each $I \in J_{(N)}$, so $E$ is then a disjoint union of intervals $I \in J_{(N)}$
(unless $E$ is empty, in which case we are done).
For every $I \subset J$, let
	$$f_I := \La\varphi\Ra_I; \:\: F_I := \La |\varphi|\Ra_I; \:\: \lambda_I := \lambda - \sum_{K: I\subsetneq K\subset J} \Delta_K^2\varphi. $$
Then note that
	$$ f = f_J = \frac{1}{2}(f_{J_{+}} + f_{J_{-}}); \:\: 
		F = F_J = \frac{1}{2}(F_{J_{+}} + F_{J_{-}}); \:\: \lambda = \lambda_J;
		\:\: \Delta_J^2\varphi = \frac{1}{4}(f_{J_{+}} - f_{J_{-}})^2 \leq F^2_J. $$
Now, we describe the iteration procedure:
\begin{itemize}
\item If $\lambda \leq \Delta_J^2\varphi$, then the obstacle condition gives that $|J|m(f, F, \lambda) = |J| \geq |E|$, and we are done.
\item Otherwise, we have $\lambda_{J_{+}} = \lambda_{J_{-}} = \lambda - \Delta_J^2\varphi > 0$, so then we apply the main inequality for $m$ to obtain:
	$$ |J| m (f, F, \lambda) \geq |J_{-}| m(f_{J_{-}}, F_{J_{-}}, \lambda_{J_{-}}) + 
		|J_{+}| m(f_{J_{+}}, F_{J_{+}}, \lambda_{J_{+}}).$$
	\begin{itemize}
	\item If $\lambda_{J_{+}} \leq \Delta_{J_{+}}^2\varphi \leq F_{J_{+}}^2$, then this becomes
		$$ |J|m(f, F, \lambda) \geq |J_{-}|m(f_{J_{-}}, F_{J_{-}}, \lambda_{J_{-}}) + |J_{+}|,$$
	and if we iterate further, we only do so on $J_{-}$. Also note that, in this case, $\lambda_I \leq 0$ for any
	$I\in J_{(N)}$ with $I \subsetneq J_{+}$.
	\item Otherwise, iterate the $J_{+}$ term further, with $\lambda_{J_{+-}} = \lambda_{J_{++}} = \lambda - \Delta_J^2\varphi - \Delta_{J_{+}}^2\varphi > 0$.
	\end{itemize}
\end{itemize}

Continuing this process down to the last dyadic level $N$, we have
	\begin{equation}\label{E:bM-LstSup-2}
	|J|m(f, F, \lambda) \geq \sum_{I \in J_{(N)}: \lambda_I > 0} |I|m(f_I, F_I, \lambda_I) + 
		\sum_{I\in J_{(N)}: \lambda_I \leq 0} |I|.
	\end{equation}
Finally, it is easy to see that for any $I \in J_{(N)}$, we have $I \subset E$ if and only if $\lambda_I \leq \Delta_I^2\varphi$,
and again by the obstacle condition, if $I \subset E$ and $\lambda_I > 0$, then $m(f_I, F_I, \lambda_I) = 1$. 
So \eqref{E:bM-LstSup-2} gives us the desired conclusion:
	$$ |J|m(f, F, \lambda) \geq \sum_{I\in J_{(N)}: I \subset E} |I| = |E|. $$
\end{proof}

\subsection{$\bL$ is the Greatest Subsolution.} \label{Ss:bL-GrtSub}
Let us also consider the main inequality for $\bL$ in more generality:
	\begin{equation}\label{E:MI-l}
	\ell(f, p, \lambda+a^2) \leq \frac{1}{2}\bigg(\ell(f+a, p+b, \lambda) + \ell(f-a, p-b, \lambda)\bigg).
	\end{equation}

\begin{definition}\label{D:Subsolution}
We say that a function $\ell(f, p, \lambda)$ defined on $\Omega_{\bL}$ is a \textit{subsolution} 
for the main inequality \eqref{E:MI-l} provided that $\ell$ is non-negative, continuous, 
and satisfies 
	\begin{enumerate}[1).]
	\item The main inequality \eqref{E:MI-l};
	\item \textit{Range/Obstacle Condition:} $|f| \leq \ell(f, p, \lambda) \leq \max\{|f|, \sqrt{\lambda}\}$;
	\item \textit{Boundary Condition:} $\ell(f, 0, \lambda) = |f|$.
	\end{enumerate}
\end{definition}

\begin{theorem}\label{T:bL-GrtSub}
If $\ell$ is any subsolution as defined above, then $\ell \leq \bL$.
\end{theorem}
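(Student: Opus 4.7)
The plan is to mirror the structure of the proof of Theorem~\ref{T:bM-LstSup}, reducing to finite Haar expansions and then iterating the main inequality down the dyadic tree. Given $\varphi$ admissible for $\bL(f, p, \lambda)$, I would consider its Haar partial sums $\varphi_N$; these converge to $\varphi$ in $L^1(J)$, so $\La|\varphi_N|\Ra_J \to \La|\varphi|\Ra_J$. Moreover, $S_J^2\varphi_N = \sum_{k=1}^N |\Delta_{J_k}\varphi|^2$ is monotone nondecreasing and converges pointwise a.e.\ to $S_J^2\varphi$, so by monotone convergence the values $p_N := |\{x \in J : S_J^2\varphi_N(x) \geq \lambda\}|/|J|$ increase to $p$. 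If the finite-Haar case yields $\ell(f, p_N, \lambda) \leq \La|\varphi_N|\Ra_J$, then the assumed continuity of $\ell$ in $p$ lets me pass to the limit and conclude.

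For $\varphi$ with Haar expansion supported at levels up to some $N$, I would iterate \eqref{E:MI-l} down the dyadic tree rooted at $J$. For $I \subseteq J$ define $f_I := \La\varphi\Ra_I$, $\lambda_I := \lambda - \sum_{K : I \subsetneq K \subseteq J} \Delta_K^2\varphi$, and $p_I := |\{x \in I : S_I^2\varphi(x) \geq \lambda_I\}|/|I|$; then $f_I = (f_{I_-}+f_{I_+})/2$, $p_I = (p_{I_-}+p_{I_+})/2$, and $\lambda_I = \lambda_{I_\pm} + \Delta_I^2\varphi$. As long as $\Delta_I^2\varphi < \lambda_I$ (equivalently $\lambda_{I_\pm} > 0$), the main inequality yields
\[
|I|\,\ell(f_I, p_I, \lambda_I) \leq |I_-|\,\ell(f_{I_-}, p_{I_-}, \lambda_{I_-}) + |I_+|\,\ell(f_{I_+}, p_{I_+}, \lambda_{I_+}).
\]
Stop at $I$ when either (a) $\lambda_I \leq \Delta_I^2\varphi$ (so $I \subseteq E := \{S_J^2\varphi \geq \lambda\}$ and $p_I = 1$), or (b) $I$ lies at the terminal level $N$ (so $\Delta_I^2\varphi = 0$, $\varphi \equiv f_I$ on $I$, and $p_I = 0$). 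The terminal intervals partition $J$.

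At terminal intervals I would bound $|I|\,\ell(f_I, p_I, \lambda_I)$ by $\int_I |\varphi|$. In case~(b), the boundary condition gives $\ell(f_I, 0, \lambda_I) = |f_I|$, and since $\varphi \equiv f_I$ on $I$, $|I||f_I| = \int_I |\varphi|$. In case~(a), the range condition gives $\ell(f_I, 1, \lambda_I) \leq \max\{|f_I|, \sqrt{\lambda_I}\}$; using the identity $\sqrt{\Delta_I^2\varphi} = |f_{I_-} - f_{I_+}|/2$, one has $\sqrt{\lambda_I} \leq \sqrt{\Delta_I^2\varphi} \leq (|f_{I_-}|+|f_{I_+}|)/2 \leq \La|\varphi|\Ra_I$, while $|f_I| \leq \La|\varphi|\Ra_I$ trivially, so again $|I|\,\ell(f_I, 1, \lambda_I) \leq \int_I |\varphi|$. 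Summing over the terminal partition gives $|J|\,\ell(f, p, \lambda) \leq \int_J |\varphi|$, i.e., $\ell(f, p, \lambda) \leq \La|\varphi|\Ra_J$, as required.

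The main obstacle is the case~(a) stopping above the terminal level, where one must extract a bound comparable to $\La|\varphi|\Ra_I$ from the rather weak range condition $\ell \leq \max\{|f|, \sqrt{\lambda}\}$. The crucial observation is that stopping in case~(a) forces $\sqrt{\lambda_I} \leq \sqrt{\Delta_I^2\varphi}$, and $\sqrt{\Delta_I^2\varphi}$ equals exactly half the jump $|f_{I_-}-f_{I_+}|$ of the averages across the children of $I$, which is in turn controlled by $\La|\varphi|\Ra_I$ via the triangle inequality. This is precisely the quantitative content that makes the range condition sharp enough for the telescoping to close, and it is the reason the subsolution definition posits the two-sided range bound rather than merely an obstacle of the form $\ell(f, 1, \lambda) \leq \sqrt{\lambda}$.
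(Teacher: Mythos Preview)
Your proof is correct and follows essentially the same route as the paper: reduce to finite Haar expansions via continuity in $p$, iterate the main inequality \eqref{E:MI-l} down the dyadic tree, and at terminal intervals use the boundary condition $\ell(f_I,0,\lambda_I)=|f_I|$ when $p_I=0$ and the range bound $\ell\leq\max\{|f_I|,\sqrt{\lambda_I}\}$ together with $\sqrt{\lambda_I}\leq |\Delta_I\varphi|\leq \La|\varphi|\Ra_I$ when $p_I=1$. The only cosmetic difference is that you stop the iteration early at intervals where $\lambda_I\leq\Delta_I^2\varphi$, whereas the paper carries everything down to the bottom level before invoking the range condition there; your handling of the key case~(a) estimate is exactly the computation the paper abbreviates as $\max\{|f|,|\Delta_J\varphi|\}\leq\La|\varphi|\Ra_J$.
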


\begin{proof}
We must prove that $\ell(f, p, \lambda) \leq \La|\varphi|\Ra_J$ for any function $\varphi$ on $J$ with $\La\varphi\Ra_J = f$
and $\frac{1}{|J|}|E|= p$, where $E = \{x\in J : S_J^2\varphi(x) \geq \lambda\}$. As before, we may assume that there is some dyadic level $N \geq 0$ below which the Haar coefficients of $\varphi$ are zero, and assume that $p$ is a dyadic rational.

If $\lambda \leq \Delta_J^2\varphi$, then by condition 2):
	$$ \ell(f, p, \lambda) \leq \max\{|f|, \sqrt{\lambda}\} \leq \max\{|f|, |\Delta_J\varphi|\} \leq \La|\varphi|\Ra_J,$$
and we are done. Otherwise, put $\lambda_{J_{\pm}} = \lambda - \Delta_J^2\varphi > 0$, $f_{J_{\pm}} = \La\varphi\Ra_{J_{\pm}}$, and
	$$p_{J_{\pm}} = \frac{1}{|J_{\pm}|}|\{x \in J_{\pm}: S_{J_{\pm}}^2\varphi(x) \geq \lambda_{J_{\pm}}\}|.$$
Then by the Main Inequality:
	$$|J|\ell(f,p,\lambda) \leq |J_{-}|\ell(f_{J_{-}}, p_{J_{-}}, \lambda_{J_{-}}) +
		|J_{+}|\ell(f_{J_{+}}, p_{J_{+}}, \lambda_{J_{+}}).$$
If $\lambda_{J_{\pm}} \leq \Delta_{J_{\pm}}^2\varphi$, it follows as before that 
	$|J_{\pm}|\ell(f_{J_{\pm}}, p_{J_{\pm}}, \lambda_{J_{\pm}}) \leq \int_{J_{\pm}}|\varphi|$,
and otherwise we iterate further on $J_{\pm}$.

Continuing in this way down to the last level $N$ and putting 
	$\lambda_I := \lambda - \Delta_{I^{(1)}}^2\varphi - \ldots - \Delta_J^2\varphi$
for every $I \in J_{(N)}$, the previous iterations have covered all cases where $\lambda_I \leq 0$, and we have
	\begin{equation}\label{E:bL-GrtSub-temp} 
	|J|\ell(f, p, \lambda) \leq \sum_{I\in J_{(N)}: \lambda_I \leq 0} \int_I |\varphi| + 
		\sum_{I \in J_{(N)}: \lambda_I > 0} |I| \ell(f_I, p_I, \lambda_I).
	\end{equation}
Now note that for $I \in J_{(N)}$:
	$$p_I = \frac{1}{|I|}|\{x\in I: S_I^2\varphi(x) \geq \lambda_I\}| = 
		\frac{1}{|I|}|\{x\in I: \Delta_I^2\varphi(x) \geq \lambda_I\}| =
		\left\{ \begin{array}{ll}
		0 & \text{, if } I \not\subset E \\
		1 & \text{, if } I \subset E.
		\end{array}\right. $$
So, if $I \not\subset E$, then we use the boundary condition 3):
	$$\ell(f_I, p_I, \lambda_I) = \ell(f_I, 0, \lambda_I) = |f_I| \leq \La|\varphi|\Ra_I,$$
and if $I \subset E$, or $\lambda_I \leq \Delta_I^2\varphi$, we use condition 2) as before to obtain
	$\ell(f_I, p_I, \lambda_I) \leq \max\{|f_I|, |\Delta_I\varphi|\} \leq \La|\varphi|\Ra_I.$
Finally, \eqref{E:bL-GrtSub-temp} becomes:
	$$|J|\ell(f, p, \lambda) \leq \sum_{I \in J_{(N)}}\int_I |\varphi| = \int_J |\varphi|.$$
\end{proof}

\begin{remark} \label{R:GrtSub-Bdry}
Later in Section \ref{S:Boundaries}, we will look at subsolutions for the particular case $\bL(f, 1, \lambda)$.
We note that the boundary condition 3). above will no longer be needed there: when $p = 1$, we are looking only at functions
$\varphi$ with $S_J^2\varphi \geq \lambda$ \textit{almost everywhere} on $J$, so at the end of the proof,
there will be no intervals left outside $E$, and there will be no terms of the form $\ell(f_I, 0, \lambda_I)$.
\end{remark}

\begin{remark} \label{R:BollDef}
Our definition of the Bellman function $\bL$ was inspired by Bollobas \cite{Bollobas}, who worked with
$$ L_B(s, h) := \inf \left\{\int_0^1|\varphi|\,dx : \text{supp}(\varphi) \subset [0,1]; \: \int_0^1\varphi\,dx = h; \:
	S\varphi \equiv s \text{ on } [0,1] \right\}. $$
We claim that $L_B(s, h) = \bL(h, 1, s^2)$. In fact, we may define $\bL(f, p, \lambda)$ in general by replacing ``$\geq \lambda$'' with
``$= \lambda$.'' To see this, let
	$$\bL'(f, p, \lambda) := \inf\{\La|\varphi|\Ra_J : supp(\varphi) \subset J; \: \La \varphi\Ra_J = f; \:
		\frac{1}{|J|}|\{x\in J: S_J^2\varphi(x) = \lambda\}| = p\}.$$
We claim that $\bL' = \bL$. Suppose $\varphi$ is admissible for $\bL'(f, p, \lambda)$. Then 
	$$q := \frac{1}{|J|}|\{x\in J: S_J^2\varphi(x) \geq \lambda\}| \geq \frac{1}{|J|}|\{x\in J: S_J^2\varphi(x) = \lambda\}| = p,$$
so $\varphi$ is also admissible for $\bL(f, q, \lambda)$ with $q \geq p$. Then, since $\bL$ is non-decreasing in the second variable,
$\La|\varphi|\Ra_J \geq \bL(f, q, \lambda) \geq \bL(f, p, \lambda)$. This shows that $\bL' \geq \bL$. 

\medskip

To see the converse, we note that $\bL'$ is a subsolution for the main inequality \eqref{E:MI-l}, as in Definition \ref{D:Subsolution}.
 It is easy to show in the usual way that $\bL'$ satisfies \eqref{E:MI-l}.
Moreover, $\bL'$ satisfies the same range condition \eqref{E:Range-bL} as $\bL$:
	$|f| \leq \bL'(f, p, \lambda) \leq (p-1)|f| + p \max(|f|, \sqrt{\lambda}).$
The proof of this inequality for $\bL$ goes through identically for $\bL'$, since the test functions $\varphi$ we constructed for each dyadic rational $p$ really satisfied $\{x\in J: S_J^2\varphi(x) \geq \lambda\} = \{x\in J: S_J^2\varphi(x) = \lambda\}$.
Then by Theorem \ref{T:bL-GrtSub} (that claims $\bL$ to be the greatest subsolution for the main inequality \eqref{E:MI-l}) it follows that $\bL' \leq \bL$.
\end{remark}

\section{Relationships between $\bM$ and $\bL$} \label{S:RelsML}

\begin{theorem} \label{T:MLRel}
$\bL(f, p, \lambda)$ is the smallest value of $F$ for which $\bM(f, F, \lambda) \geq p$:
	\begin{equation} \label{E:ML-rel}
	\bL(f, p, \lambda) = \inf\{F \geq |f|: \bM(f, F, \lambda) \geq p\}.
	\end{equation}
Moreover, $\bM(f, F, \lambda)$ is the largest value of $p$ such that $\bL(f, p, \lambda) \leq F$:
	\begin{equation} \label{E:LM-rel}
	\bM(f, F, \lambda) = \sup\{p\in [0, 1]: \bL(f, p, \lambda) \leq F\}.
	\end{equation}
\end{theorem}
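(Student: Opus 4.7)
The two identities express that $\bL(f,\cdot,\lambda)$ and $\bM(f,\cdot,\lambda)$ are generalized inverses of each other, and the plan is to prove each by a pair of inequalities. The easy halves come directly from admissibility. For \eqref{E:ML-rel}, any $\varphi$ admissible for $\bL(f,p,\lambda)$ is also admissible for $\bM(f,\La|\varphi|\Ra_J,\lambda)$, so $\bM(f,\La|\varphi|\Ra_J,\lambda)\ge p$, which places $\La|\varphi|\Ra_J$ in the set on the right and yields $\bL(f,p,\lambda)\ge F_0$ upon infimizing. Symmetrically, each $\varphi$ admissible for $\bM(f,F,\lambda)$ is admissible for $\bL(f,p_\varphi,\lambda)$ (where $p_\varphi$ is its level-set fraction), so $\bL(f,p_\varphi,\lambda)\le F$ forces $p_\varphi\le p_0$, giving $\bM(f,F,\lambda)\le p_0$ after supping.

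The harder half of \eqref{E:LM-rel} is the more direct of the two. For any $p<p_0$, pick $p'\in(p,p_0]$ with $\bL(f,p',\lambda)\le F$; then for each $\epsilon>0$ there is $\varphi$ admissible for $\bL(f,p',\lambda)$ with $\La|\varphi|\Ra_J\le F+\epsilon$, which is also admissible for $\bM(f,\La|\varphi|\Ra_J,\lambda)$ since its level-set fraction is exactly $p'$, giving $\bM(f,F+\epsilon,\lambda)\ge p'$ by monotonicity in $F$. Letting $\epsilon\downarrow 0$ and using continuity of $\bM$ in $F$ (Proposition \ref{P:bM-Props}, property \textbf{8).}) yields $\bM(f,F,\lambda)\ge p'>p$, and since $p<p_0$ was arbitrary, $\bM(f,F,\lambda)\ge p_0$.

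The harder half of \eqref{E:ML-rel} is the main obstacle, because converting a $\bM$-admissible $\varphi$ into a $\bL$-admissible one demands the \emph{exact} level-set fraction $p$. I would circumvent this via convexity of $\bL$ in $p$. Fix $F>F_0$, so that $\bM(f,F,\lambda)\ge p$. When $p<1$, I first upgrade this to strict inequality $\bM(f,F,\lambda)>p$: if instead $\bM(f,F,\lambda)=p$, then the concavity, monotonicity, and obstacle condition $\bM(f,F',\lambda)=1$ for $F'\ge\sqrt\lambda$ of $\bM(f,\cdot,\lambda)$ would force it to be constantly $p$ beyond $F$, preventing it from reaching $1$ at $F'=\sqrt\lambda$. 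Now take $\varphi$ admissible for $\bM(f,F,\lambda)$ with level-set fraction $q\in(p,\bM(f,F,\lambda)]$; this $\varphi$ is admissible for $\bL(f,q,\lambda)$, so $\bL(f,q,\lambda)\le F$, and convexity of $\bL$ in $p$ (Proposition \ref{P:bL-Props}, property \textbf{8).}) together with the boundary value $\bL(f,0,\lambda)=|f|$ from \eqref{E:bL-p0} gives
\[
\bL(f,p,\lambda)\le(1-p/q)|f|+(p/q)\bL(f,q,\lambda)\le(1-p/q)|f|+(p/q)F\le F,
\]
since $|f|\le F_0\le F$. Letting $F\downarrow F_0$ completes the proof for $p<1$. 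The boundary case $p=1$ I would handle by a direct construction: starting with $\varphi$ admissible for $\bM(f,F,\lambda)$ whose level set covers all but a small collection $B\subset J$ of deep dyadic descendants, one perturbs $\varphi$ by adding Haar terms one generation below $B$ that raise $S_J^2\varphi$ above $\lambda$ on $B$ while preserving $\La\varphi\Ra_J=f$ and inflating $\La|\varphi|\Ra_J$ by at most $\sqrt\lambda\,|B|/|J|$, thereby producing a $\bL(f,1,\lambda)$-admissible function of the required norm.
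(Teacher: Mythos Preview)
Your argument is correct, but for the inequality $\bL(f,p,\lambda)\le\inf\{F:\bM(f,F,\lambda)\ge p\}$ you work considerably harder than necessary. The paper disposes of this direction in three lines using only monotonicity and continuity of $\bL$ in $p$ (Proposition~\ref{P:bL-Props}, properties \textbf{8).} and \textbf{10).}): given any $F$ with $\bM(f,F,\lambda)\ge p$ and any $\epsilon>0$, choose a near-extremal $\varphi$ with $\La\varphi\Ra_J=f$, $\La|\varphi|\Ra_J=F$, and level-set fraction $q>p-\epsilon$; then $\varphi$ is admissible for $\bL(f,q,\lambda)$, so $\bL(f,p-\epsilon,\lambda)\le\bL(f,q,\lambda)\le F$ by monotonicity, and continuity lets $\epsilon\downarrow 0$. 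This covers all $p\in[0,1]$ at once, with no need for the strict-inequality upgrade via concavity of $\bM$, no need for the separate $p=1$ construction, and no need for the convexity interpolation (which in any case could be replaced by plain monotonicity once you have $q>p$). Your route does buy something, though: the strict-inequality upgrade and the $p=1$ perturbation argument are interesting in their own right and essentially foreshadow the construction in Theorem~\ref{T:Ineq-Bol}. The remaining three half-inequalities in your proof match the paper's (brief) treatment.
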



\begin{proof}
Suppose $\bM(f, F, \lambda) \geq p$ and let $\epsilon > 0$. Then there is a function $\varphi$ on $J \in \cD$ such that:
	$$ \La\varphi\Ra_J = f, \:\: \La|\varphi|\Ra_J = F, \:\: 
	q:= \frac{1}{|J|} |\{x\in J: S_J^2\varphi(x) \geq \lambda\}| > p - \epsilon.$$
Then $\varphi$ is admissible for $\bL(f, q, \lambda)$, and since $\bL$ is non-decreasing in the second variable,
	$$\bL(f, p-\epsilon, \lambda) \leq \bL(f, q, \lambda) \leq \La|\varphi|\Ra_J = F.$$
Since this holds for all $\epsilon > 0$,
	$ \bL(f, p, \lambda) \leq F$ for all $F$  such that $\bM(f, F, \lambda)\geq p. $
Further, for every $\epsilon > 0$ there is a function $\varphi$ on $J \in \cD$ such that
	$$ \La\varphi\Ra_J = f, \:\:  \frac{1}{|J|} |\{x\in J: S_J^2\varphi(x) \geq \lambda\}| = p, \:\:
		F:= \La|\varphi|\Ra_J < \bL(f, p, \lambda) + \epsilon.$$
But $\varphi$ is admissible for $\bM(f, F, \lambda)$, and then clearly $\bM(f, F, \lambda) \geq p$.
This proves \eqref{E:ML-rel}.
The other equation \eqref{E:LM-rel} follows similarly.
\end{proof}

\subsection{Optimal Obstacle Conditions for $\bM$ and $\bL$.} \label{Ss:OptimalOC}

Looking back at the obstacle condition \eqref{E:OC-bM} for $\bM$, namely $\bM(f, F, \lambda) = 1$ whenever $F \geq \sqrt{\lambda}$,
there is no reason to think this condition is optimal. That is, there well could be values of $F$ strictly smaller than $\sqrt{\lambda}$ where $\bM$ is $1$. As it turns out, the optimal obstacle condition for $\bM$ can be obtained from information about $\bL$. 
Since $\bM \leq 1$, taking $p = 1$ in \eqref{E:ML-rel}, we obtain exactly this:
	\begin{equation}\label{E:bM-SharpOC-L}
	\bL(f, 1, \lambda) = \inf\{F\geq |f|: \bM(f, F, \lambda) = 1\}.
	\end{equation}
On the other hand, the obstacle condition for $\bL$ really comes from its range,
	$|f| \leq \bL(f, p, \lambda) \leq \max\{|f|, \sqrt{\lambda}\}$,
 which clearly shows that $\bL = |f|$ whenever $|f| \geq \sqrt{\lambda}$. However, this says nothing about $p$, and we do know that, for example,
$\bL(f, 0, \lambda) = |f|$ regardless of the behavior of $f$ and $\lambda$. What other values of $p$ could this hold for? This is again obtained precisely from information about $\bM$, by letting $F = |f|$ in \eqref{E:LM-rel}:
	\begin{equation} \label{E:bL-SharpOC-M}
	\bM(f, |f|, \lambda) = \sup\{p\in[0,1]: \bL(f, p, \lambda) = |f|\}.
	\end{equation}
So, if we find the expressions for $\bL$ and $\bM$ along these boundaries of their domains, we also obtain the optimal obstacle conditions for $\bM$ and $\bL$, respectively. 

We denote these boundary values of $\bM$ and $\bL$ by $\bM_b$ and $\bL_b$, respectively, defined as follows.
For $f \geq 0$ and $\lambda > 0$,
	\begin{equation}\label{E:Mb-Def}
	\bM_b(f, \lambda) := \bM(f, |f|, \lambda) = \sup \frac{1}{|J|} |\{x\in J: S_J^2\varphi(x) \geq \lambda\}|, 
	\end{equation}
where the supremum is over all functions $\varphi$ on $J$ with $\varphi \geq 0$ a.e. and $\La\varphi\Ra_J = f$.
Note that since $\bM$ is even in $f$, it suffices to consider $\bM_b$ for $f \geq 0$. Moreover, the only admissible functions for 
$\bM(f, |f|, \lambda)$ are those with $\varphi \geq 0$ a.e. (for $f \geq 0$) or $\varphi \leq 0$ a.e. (for $f \leq 0$).
Similarly,
	\begin{equation}\label{E:Lb-Def}
	\bL_b(f, \lambda) := \bL(f, 1, \lambda) = \inf\{\La|\varphi|\Ra_J: \text{supp}(\varphi) \subset J; \: \La \varphi\Ra_J = f; \:
		S_J^2\varphi \geq \lambda \text{ a. e. on } J\}.
	\end{equation}
We find these functions in Section \ref{S:Boundaries}, where we prove the following results.

\begin{theorem}\label{T:Mb-Bdry}
The function $\bM_b$ is given by
	\begin{equation}\label{E:bM-Bdry}
	\bM_b(|f|, \lambda) = \bM(f, |f|, \lambda) = \left\{ \begin{array}{ll}
		\frac{\Phi\left(\frac{|f|}{\sqrt{\lambda}}\right)}{\Phi(1)}, & |f| < \sqrt{\lambda} \\
		1, & |f| \geq \sqrt{\lambda}.
	\end{array}\right. = \min\left(\frac{\Phi(|f|/\sqrt{\lambda})}{\Phi(1)}, 1\right),
	\end{equation}
where
	$$\Phi(\tau) := \int_0^{\tau} e^{-x^2/2}\,dx,$$
for all $\tau \geq 0$.
\end{theorem}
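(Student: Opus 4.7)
By homogeneity \eqref{E:Hom-bM}, taking $t = 1/\sqrt{\lambda}$ yields $\bM_b(f,\lambda) = \bM_b(f/\sqrt{\lambda}, 1)$, so it is enough to treat $\lambda = 1$; the range $|f| \geq 1$ is immediate from the obstacle condition \eqref{E:OC-bM}. Thus everything reduces to showing $\bM_b(f, 1) = \Phi(f)/\Phi(1)$ for $0 \leq f < 1$. Any admissible $\varphi$ for $\bM(f, |f|, \lambda)$ is nonnegative a.e.\ (for $f \geq 0$), so in a dyadic concatenation $\varphi_-\unit_{J_-} + \varphi_+\unit_{J_+}$ we have $\varphi_\pm \geq 0$ and hence $F_\pm = f_\pm \geq 0$. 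The main inequality \eqref{E:MI-bM} then restricts on the boundary $F = |f|$ to
\[
\bM_b(f, \lambda + a^2) \geq \tfrac{1}{2}\bigl(\bM_b(f+a, \lambda) + \bM_b(f-a, \lambda)\bigr), \qquad 0 \leq a \leq f,
\]
where the constraint $a \leq f$ (keeping $f - a \geq 0$) is essential.

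For the upper bound I would prove that $m(f,\lambda) := \min\bigl(\Phi(f/\sqrt{\lambda})/\Phi(1),\, 1\bigr)$ is a supersolution for this reduced main inequality with obstacle $m = 1$ on $\{f \geq \sqrt{\lambda}\}$, and then iterate down dyadic children exactly as in the proof of Theorem \ref{T:bM-LstSup}, restricted to nonnegative $\varphi$ of finite Haar expansion, to conclude $\bM_b \leq m$. The structural fact behind the supersolution property is that on $\{0 < f < \sqrt{\lambda}\}$ a direct differentiation of $\Phi(f/\sqrt{\lambda})$ gives $m_\lambda = -\frac{f e^{-f^2/(2\lambda)}}{2\lambda^{3/2}\Phi(1)} = \tfrac{1}{2} m_{ff}$, so $m$ solves the heat equation in $\lambda$, encoding the reduced main inequality to second order in $a$ with equality. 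To upgrade to the finite-difference form I would split cases: when $f + a \leq \sqrt{\lambda}$ the inequality is equivalent to $\Phi(f/\sqrt{\lambda+a^2}) \geq \tfrac{1}{2}\bigl(\Phi((f+a)/\sqrt{\lambda}) + \Phi((f-a)/\sqrt{\lambda})\bigr)$, whose leading nonvanishing Taylor coefficient in $a$ is proportional to $f(3\lambda - f^2)\, e^{-f^2/(2\lambda)}$ and hence positive throughout our range; I would close off the estimate for general $a$ by an integral representation exploiting $\Phi'' = -x e^{-x^2/2}$ and log-concavity of the Gaussian. When $f + a > \sqrt{\lambda}$, $m(f+a,\lambda) = 1$ and the inequality reduces to a one-variable monotonicity claim in $a$ that I would verify by comparison with the critical value $a = \sqrt{\lambda} - f$ (where case one already applies).

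For the matching lower bound I exploit that when $\varphi \geq 0$ the square function $S_J^2\varphi(x)$ coincides with the quadratic variation $[X]_\infty(x)$ of the dyadic martingale $X_n(x) := \La\varphi\Ra_{I_n}$, where $I_n$ is the depth-$n$ dyadic descendant of $J$ containing $x$. Hence $\bM_b(f, \lambda) = \sup_X \bP\bigl([X]_\infty \geq \lambda\bigr)$ over all nonnegative dyadic martingales with $X_0 = f$, and the candidate $m$ is precisely the Bellman value of the corresponding stochastic control problem: by It\^o's formula for a continuous martingale $W$ started at $f$, the heat equation makes $m(W_t, \lambda - [W]_t)$ a local martingale, and optional stopping at $\tau := \inf\{t : W_t = 0 \text{ or } W_t = \sqrt{\lambda - [W]_t}\}$ yields $m(f,\lambda) = \bP(W_\tau \neq 0)$, while a single deterministic jump of size $\sqrt{\lambda - [W]_\tau}$ at that instant secures $[X]_\infty \geq \lambda$. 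I would then construct dyadic extremizers by running a symmetric Bernoulli walk with small step $\epsilon$ on a sufficiently deep subtree of $J$, absorbed at $0$ and otherwise approximating the above Brownian-plus-jump strategy; the invariance principle gives $\bP(\text{success}) \to m(f,\lambda)$ as $\epsilon \to 0$, whence $\bM_b \geq m$.

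The principal obstacle lies in the finite-difference verification of the supersolution inequality for $m$: the heat equation only forces agreement to second order in $a$, so passing to finite $a$ requires a genuinely quantitative estimate. The constraint $a \leq f$ (reflecting the nonnegativity of $\varphi$) is essential here -- without it the candidate $m$ would fail to be a supersolution on the full strip $a \in \bR$, and the restricted main inequality would cease to encode the correct extremization problem.
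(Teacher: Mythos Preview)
Your upper-bound strategy coincides with the paper's: verify that $m(f,\lambda) = \min\bigl(\Phi(f/\sqrt{\lambda})/\Phi(1),\,1\bigr)$ is a supersolution for the restricted main inequality and invoke the least-supersolution machinery. You correctly identify the crux---the heat equation forces only second-order agreement in $a$, so the finite-increment inequality needs a separate argument---but your proposed closure via ``log-concavity of the Gaussian'' is too vague to stand as a proof. The paper dispatches this step elementarily: with $\tau=f/\sqrt{\lambda}$ and $x=a/\sqrt{\lambda}$ the inequality reads $2\Phi(\tau)\ge \Phi(X^+)+\Phi(X^-)$ for $X^\pm=(\tau\pm x)/\sqrt{1-x^2}$, and one shows the right side is nonincreasing in $x\in[0,\tau]$ by computing that $g'(x)\le 0$ is equivalent to $G(x):=\dfrac{2\tau x}{1-x^2}-\log\dfrac{1+\tau x}{1-\tau x}\ge 0$, which follows from $G(0)=0$ and $G'(x)=2\tau\Bigl(\dfrac{1+x^2}{(1-x^2)^2}-\dfrac{1}{1-\tau^2x^2}\Bigr)\ge 0$. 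Your case $f+a>\sqrt{\lambda}$ then reduces to this one exactly as you say, since there $X^+>1$ and $\Phi(X^+)\ge \Phi(1)$.

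Your lower-bound route is genuinely different from the paper's and worth contrasting. The paper constructs no extremizers at all: it introduces the change of variable $S=\Phi(\tau)$, notes the identity $\bigl(\tau\alpha'+\alpha''\bigr)e^{\tau^2}=\alpha_{SS}$, so that the infinitesimal main inequality for $\alpha(\tau):=\bM_b(\tau,1)$ becomes concavity of $\alpha$ in $S$, and then uses the elementary fact that a nonnegative concave function vanishing at the origin has $g(S)/S$ nonincreasing to get $\alpha(\tau)/\Phi(\tau)\ge \alpha(1)/\Phi(1)=1/\Phi(1)$ for $\tau\le 1$. This is essentially a two-line argument once the substitution is spotted, though it tacitly assumes enough regularity of the true $\bM_b$ to pass from the discrete main inequality to its infinitesimal form. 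Your probabilistic construction---Brownian motion stopped when it hits $0$ or the parabola $W=\sqrt{\lambda-[W]}$, then a symmetric jump of size $W_\tau$, discretized by small-step Bernoulli walks on a deep dyadic tree---would produce explicit near-extremizers and bypass any smoothness assumption, but at the cost of a much longer argument: you must control convergence of the hitting probability through a curved boundary, manage overshoot of the discrete walk past that boundary, and check that the accumulated square function lands within $o(1)$ of $\lambda$ at the jump time. Both approaches are sound in principle; the paper's is dramatically shorter, yours is more constructive.
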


\begin{theorem}\label{T:Lb-Bdry}
The function $\bL_b$ is given by
	\begin{equation}\label{E:bL-Bdry}
	\bL_b(f, \lambda) = \bL(f, 1, \lambda) = \left\{ \begin{array}{ll}
		\frac{\sqrt{\lambda}\Psi\left(\frac{|f|}{\sqrt{\lambda}}\right)}{\Psi(1)}, & 0 \leq |f| < \sqrt{\lambda}\\
		|f|, & |f| \geq \sqrt{\lambda}.
		\end{array}\right. = \sqrt{\lambda} \max\left( \frac{\Psi(|f|/\sqrt{\lambda})}{\Psi(1)}, 
		\frac{|f|}{\sqrt{\lambda}} \right),
	\end{equation}
where
	$$\Psi(\tau) = \tau \Phi(\tau) + e^{-\tau^2/2},$$
for all $\tau \geq 0$.
\end{theorem}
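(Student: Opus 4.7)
The plan is to combine the greatest subsolution theorem (Theorem~\ref{T:bL-GrtSub}, in the relaxed form given by Remark~\ref{R:GrtSub-Bdry} which drops the $p=0$ boundary condition when $p = 1$) with an explicit extremizing construction. By homogeneity \eqref{E:Hom-bL}, write $\bL(f, 1, \lambda) = \sqrt{\lambda}\, g(|f|/\sqrt{\lambda})$ where $g(\tau) := \bL(\tau, 1, 1)$, so the problem reduces to identifying $g$ on $[0, \infty)$; the range condition \eqref{E:OC-bL} immediately yields $g(\tau) = \tau$ on $[1, \infty)$, leaving the claim $g(\tau) = \Psi(\tau)/\Psi(1)$ on $[0, 1)$ to prove.

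For the lower bound, I introduce the candidate
\[
\ell(f, \lambda) := \begin{cases}\sqrt{\lambda}\,\Psi(|f|/\sqrt{\lambda})/\Psi(1), & |f| < \sqrt{\lambda},\\ |f|, & |f| \geq \sqrt{\lambda},\end{cases}
\]
and check it is a subsolution. Continuity across $|f| = \sqrt{\lambda}$ is automatic since $\Psi(1)/\Psi(1) = 1$. The range condition $|f| \leq \ell \leq \max(|f|, \sqrt{\lambda})$ reduces in the interior to $\tau \leq \Psi(\tau)/\Psi(1) \leq 1$ on $[0, 1]$, which follows from $\Psi(0) = 1$, $\Psi'(\tau) = \Phi(\tau) \geq 0$, and $\Psi''(\tau) = e^{-\tau^2/2} > 0$. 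The shape of $\Psi$ is engineered so that in the interior $\ell$ satisfies the heat-type PDE $\partial_\lambda \ell = \tfrac{1}{2} \partial_f^2 \ell$; through the ansatz $\ell = \sqrt{\lambda}\, g(f/\sqrt{\lambda})$ this reduces to the ODE $g''(\tau) + \tau g'(\tau) - g(\tau) = 0$, satisfied by $\Psi$ by direct computation from the identities above. In the outer region $\ell = |f|$ both $\partial_\lambda \ell$ and $\partial_f^2 \ell$ vanish, while across the interface $\partial_f \ell$ jumps upward from $\Phi(1)/\Psi(1)$ to $1$, giving a positive singular contribution to $\partial_f^2 \ell$ that dominates any jump in $\partial_\lambda \ell$. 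Homogeneity then reduces the main inequality to the scale-invariant form $\sqrt{1+s^2}\, g(\tau/\sqrt{1+s^2}) \leq \tfrac{1}{2}(g(\tau+s) + g(\tau-s))$ in $\tau = f/\sqrt{\lambda}$ and $s = a/\sqrt{\lambda}$, which I will verify by case analysis using the ODE and convexity of $g$.

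For the upper bound, I construct admissible $\varphi_n$ for $\bL(f, 1, \lambda)$ by iterated dyadic refinement: at each step split the current piece $I$ with a small martingale increment $a$, sending $(f_I, \lambda_I) \to (f_I \pm a, \lambda_I - a^2)$ on $I_\pm$, and stop refining on any subinterval where $|f_I| \geq \sqrt{\lambda_I}$ (so the square function has already reached the threshold there). As $a \to 0$ with many dyadic levels, the underlying dyadic martingale approximates a Brownian motion started at $f$ and stopped upon hitting the parabolic boundary $|x| = \sqrt{\lambda - t}$; since $\ell$ is precisely the value function of this optimal stopping problem (it is the unique HJB solution with the stated boundary data), $\La|\varphi_n|\Ra_J \to \ell(f, \lambda)$, giving $\bL_b(f,\lambda) \leq \ell(f,\lambda)$ and closing the two-sided argument.

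The main obstacle is the verification of the finite-difference main inequality across the matching curve $|f| = \sqrt{\lambda}$, where $\ell$ is only $C^0$. The scale-invariant reformulation collapses this to the one-variable inequality above, whose non-negativity is delicate: both sides match up to second order in $s$ at $s = 0$ (forced by the ODE), so the verification requires higher-order analysis and careful case splits for when $\tau \pm s$ lie in the obstacle region, leveraging convexity of $g$ and the explicit form of $\Psi$.
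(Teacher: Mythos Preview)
Your lower-bound strategy (show the candidate $\ell$ is a subsolution, then invoke the greatest-subsolution principle) matches the paper's, and your identification of the ODE $g'' + \tau g' - g = 0$ with solution $\Psi$ is correct. One caution: the distributional remark about the upward jump of $\partial_f\ell$ across $|f|=\sqrt{\lambda}$ is good intuition but does \emph{not} by itself deliver the finite-increment inequality
\[
2\ell(f,\lambda)\le \ell(f+a,\lambda-a^2)+\ell(f-a,\lambda-a^2),
\]
because the two evaluation points move simultaneously in $f$ and $\lambda$, and one or both may cross the parabola $|f|=\sqrt{\lambda}$. The paper carries this out by an explicit three-case computation (both points inside the parabola; both outside; one inside, one outside), each reduced to a one-variable inequality in $\tau$ and $x:=a/\sqrt{\lambda}$ via homogeneity. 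You should expect to do the same; the PDE heuristic alone will not close it.

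Your upper-bound strategy is \emph{genuinely different} from the paper's, and as written it has two real gaps. First, your stopping rule ``stop on $I$ once $|f_I|\ge\sqrt{\lambda_I}$'' does \emph{not} make $S_J^2\varphi\ge\lambda$ on $I$: at that moment $S_J^2\varphi = \lambda-\lambda_I<\lambda$. You need a final Haar kick of size $\sqrt{\lambda_I}$ on each stopped interval (harmless for $\langle|\varphi|\rangle_I$ since $|f_I|\ge\sqrt{\lambda_I}$ forces $\langle|\varphi|\rangle_I=|f_I|$ after the kick). Second, the passage ``as $a\to 0$ the dyadic martingale approximates Brownian motion stopped at the parabolic boundary, and $\ell$ is the HJB value function'' is a substantial convergence statement (invariance principle for a stopped walk with a moving barrier, plus uniform integrability of $|f_\tau|$) that you have not justified. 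The paper bypasses all of this: it uses the main inequality for $\bL_b$ itself to obtain the infinitesimal form $b''+\tau b'-b\ge 0$, then introduces the change of variable $T=\tau/\Psi(\tau)$ and observes that
\[
\frac{d^2}{dT^2}\!\left(\frac{b(\tau)}{\Psi(\tau)}\right)=\Psi(\tau)^3 e^{\tau^2}\bigl(b''+\tau b'-b\bigr)\ge 0,
\]
so $\beta(T):=b(\tau)/\Psi(\tau)$ is convex with $\beta'(0_+)=b'(0_+)\ge 0$, hence nondecreasing; evaluating at $\tau=1$ gives $b(\tau)\le \Psi(\tau)/\Psi(1)$ for $\tau\in[0,1]$. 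This is a two-line argument once the variable $T$ is found, and it avoids any limiting construction. Your probabilistic route can be made to work, but you should be aware that it is the harder road.
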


\subsection{The functions $\theta$ and $\eta$.}
To visualize the optimal obstacle conditions induced by $\bM_b$ and $\bL_b$ for $\bL$ and $\bM$, respectively, we use homogeneity of $\bM$ and $\bL$ to reduce the discussion to functions of two variables. 
Specifically, from \eqref{E:Hom-bM} and \eqref{E:Hom-bL}, we write
	\begin{equation}\label{E:Def-te}
	\bM(f, F, \lambda) = \bM(f/\sqrt{\lambda}, F/\sqrt{\lambda}, 1) =: \theta(\tau, \gamma) \:\:\text{ and }\:\:
	\bL(f, p, \lambda) =: \sqrt{\lambda} \eta(\tau, p),
	\end{equation}
where $\tau = f/\sqrt{\lambda}$ and $\gamma = F/\sqrt{\lambda}$. 
Thus $\theta$ is defined on $\Omega_{\theta} := \{0 \leq |\tau| \leq \gamma\}$ with values in $[0,1]$, 
and $\eta$ is defined on 
$\Omega_{\eta} := \{0 \leq p \leq 1; \: \tau \in \mathbb{R}\}$
with values satisfying $|\tau| \leq \eta(\tau, p) \leq (1-p)|\tau| + p\max(|\tau|, 1)$.
It is also clear that $\theta$ and $\eta$ are even in $\tau$, so we often restrict our attention to the domains 
$\Omega_{\theta}^{+}$ and $\Omega_{\eta}^{+}$ where $\tau \geq 0$.
Other properties that $\theta$ and $\eta$ inherit from $\bM$ and $\bL$ are easy to check:
	\begin{itemize}
	\item $\theta(0,0) = 0$ and $\eta(\tau, 0) = \tau$.
	\item $\theta$ is maximal at $\tau = 0$, and $\eta$ is minimal at $\tau = 0$:
		$$\theta(|\tau|, \gamma) \leq \theta(0, \gamma); \:\: \eta(0,p) \leq \eta(|\tau|, p).$$
	\item $\theta$ is decreasing in $\tau$ for $\tau \geq 0$, and is increasing in $\gamma$.
		$\eta$ is increasing in both $\tau \geq 0$ and $p$.
	\item $\theta$ is concave in both $\tau$ and $\gamma$, and $\eta$ is convex in both $\tau$ and $p$.
	\item The original obstacle conditions \eqref{E:OC-bM} and \eqref{E:OC-bL} for $\bM$ and $\bL$ translate to
	$$\theta(\tau, \gamma) = 1, \: \forall \gamma \geq 1 \:\:\text{ and }\:\:
	\eta(\tau, p) = |\tau|, \: \forall |\tau| \geq 1.$$

	\end{itemize}

Moreover, \eqref{E:bM-SharpOC-L} and \eqref{E:bL-SharpOC-M} become
	$$\eta(\tau, 1) = \inf\{\gamma\geq|\tau|: \theta(\tau, \gamma) = 1\} \:\:\text{ and }\:\:
		\theta(\tau, |\tau|) = \sup\{p: \eta(|\tau|, p) = |\tau|\}.$$
The expression for $\bL_b$ gives that
	$$\eta(\tau, 1) = \begin{cases}
		\frac{\Psi(|\tau|)}{\Psi(1)}, & 0 \leq |\tau| < 1,\\
		|\tau|, & |\tau| \geq 1\,.
		\end{cases}
	$$
	That yields the optimal obstacle condition for $\theta$ (see Figure \ref{fig1}).
Similarly, $\bM_b$ gives that
	$$\theta(\tau, \tau) = \begin{cases}
	 \frac{\Phi(|\tau|)}{\Phi(1)}, & 0 \leq |\tau| \leq 1,\\
	1,  & \tau \geq 1\,.
	\end{cases}
	$$
And that yields the optimal obstacle condition for $\eta$ (see Figure \ref{fig2}).

\begin{figure}[h]
\centering
\begin{minipage}{.5\textwidth}
  \centering
  \includegraphics[scale=0.32]{Pic1.jpg}
\end{minipage}%
\begin{minipage}{.5\textwidth}
  \centering
  \includegraphics[scale=0.32]{Pic2.jpg}
\end{minipage}
\caption{Initial and optimal Obstacle Conditions for $\theta$.}
\label{fig1}
\end{figure}

\begin{figure}[h]
\centering
\begin{minipage}{.5\textwidth}
  \centering
  \includegraphics[scale=0.32]{Pic3.jpg}
\end{minipage}%
\begin{minipage}{.5\textwidth}
  \centering
  \includegraphics[scale=0.32]{Pic4.jpg}
\end{minipage}
\caption{Initial and optimal Obstacle Conditions for $\eta$.}
\label{fig2}
\end{figure}

Let us give some special names to the ``interesting'' parts of the domains of $\theta$ and $\eta$, where they are unknown.
We denote by $\widetilde{\Omega_{\theta}}$ the part of the domain of $\theta$ that lies underneath the obstacle condition curve 
$\gamma = \eta(\tau, 1)$:
	$$\widetilde{\Omega_{\theta}} := \{(\tau, \gamma): 0 \leq |\tau|\leq 1; \:
	|\tau| \leq \gamma \leq \frac{\Psi(|\tau|)}{\Psi(1)} = \eta(\tau, 1)\},$$
and by $\widetilde{\Omega_{\eta}}$ the part of the domain of $\eta$ that lies above the obstacle condition curve $p = \theta(\tau, |\tau|)$:
	$$\widetilde{\Omega_{\eta}} := \{ (\tau, p): 0 \leq |\tau|\leq 1; \: p \geq \frac{\Phi(|\tau|)}{\Phi(1)}=\theta(\tau, |\tau|) \}.$$
As the next proposition shows, in these domains we can improve the results of Theorem \ref{T:MLRel}.

\begin{prop} \label{P:ML-Equals}
The functions $\bM$ and $\bL$ satisfy
	\begin{equation}\label{E:ML-Equals}
	\bM\big(f, \bL(f, p, \lambda), \lambda \big) = p \text{, for all } (\tau = f/\sqrt{\lambda}, \: p) \in \widetilde{\Omega_{\eta}},
	\end{equation}
that is, for all
	$$0 \leq |f| \leq \sqrt{\lambda} \text{ and } p \geq \frac{\Phi(|f|/\sqrt{\lambda})}{\Phi(1)}.$$
Similarly,
	\begin{equation}\label{E:LM-Equals}
	\bL\big( f, \bM(f, F, \lambda), \lambda) = F \text{, for all } (\tau = f/\sqrt{\lambda}, \: \gamma=F/\sqrt{\lambda}) \in
	\widetilde{\Omega_{\theta}},
	\end{equation}
that is, for all
	$$0 \leq |f| \leq \sqrt{\lambda} \text{ and } |f| \leq F \leq \frac{\sqrt{\lambda}\Psi(|f|/\sqrt{\lambda})}{\Psi(1)}.$$
\end{prop}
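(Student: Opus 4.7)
The plan is to reduce both equalities to an application of the Intermediate Value Theorem, using the continuity and monotonicity of $\bM$ in $F$ and $\bL$ in $p$ established in Propositions \ref{P:bM-Props} and \ref{P:bL-Props}, together with the inf/sup characterizations from Theorem \ref{T:MLRel}. The two assertions are mirror images of each other, so the strategy for one carries over to the other with minimal changes.

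For \eqref{E:ML-Equals}, I fix $f$ and $\lambda$ with $|f| \le \sqrt{\lambda}$ and consider $p$ in the range $\bM_b(f,\lambda) \le p \le 1$. By properties \textbf{8}) and \textbf{10}) of Proposition \ref{P:bM-Props}, the map $F \mapsto \bM(f, F, \lambda)$ is continuous and non-decreasing on $[|f|, \infty)$, with value $\bM_b(f,\lambda) \le p$ at $F = |f|$ and value $1 \ge p$ at $F = \sqrt{\lambda}$ (by the obstacle condition \eqref{E:OC-bM}). The Intermediate Value Theorem then produces an $F_0 \in [|f|, \sqrt{\lambda}]$ satisfying $\bM(f, F_0, \lambda) = p$, and I pick the smallest such $F_0$. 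Monotonicity forces $\bM(f, F, \lambda) < p$ for every $F \in [|f|, F_0)$, which means that $F_0$ coincides with the infimum defining $\bL(f, p, \lambda)$ in \eqref{E:ML-rel}. Therefore $\bL(f, p, \lambda) = F_0$, and evaluating $\bM$ at this value gives the desired identity $\bM(f, \bL(f, p, \lambda), \lambda) = p$.

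For \eqref{E:LM-Equals}, I fix $f, \lambda$ and $F$ with $|f| \le F \le \bL_b(f,\lambda)$. By properties \textbf{8}) and \textbf{10}) of Proposition \ref{P:bL-Props}, the map $p \mapsto \bL(f, p, \lambda)$ is continuous and non-decreasing on $[0,1]$, with $\bL(f, 0, \lambda) = |f| \le F$ (by \eqref{E:bL-p0}) and $\bL(f, 1, \lambda) = \bL_b(f,\lambda) \ge F$. The IVT yields some $p_0 \in [0, 1]$ with $\bL(f, p_0, \lambda) = F$; I select the largest such value. Monotonicity then gives $\bL(f, p, \lambda) > F$ for $p \in (p_0, 1]$, so $p_0$ is precisely the supremum appearing in \eqref{E:LM-rel}, whence $p_0 = \bM(f, F, \lambda)$ and $\bL(f, \bM(f, F, \lambda), \lambda) = F$.

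The delicate point is verifying that the ranges of $p$ and $F$ appearing in the hypotheses are sharp. The condition $p \ge \bM_b(f,\lambda)$ is exactly what guarantees that the level $F_0$ produced by the IVT lies at or above $|f|$ rather than being forced onto the flat portion of $\bM$: were $p < \bM_b(f,\lambda)$, the definition of $\bL$ would already give $\bL(f,p,\lambda) = |f|$ and composition back through $\bM$ would return $\bM_b(f,\lambda) \neq p$. The condition $F \le \bL_b(f,\lambda)$ plays the symmetric role, ensuring $p_0 \le 1$; otherwise $\bM(f,F,\lambda) = 1$ and composition would return $\bL_b(f,\lambda) \neq F$. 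Thus the regions $\widetilde{\Omega_{\eta}}$ and $\widetilde{\Omega_{\theta}}$ in which these inverse-type identities hold are precisely delineated by the boundary curves $\bM_b$ and $\bL_b$ computed in Theorems \ref{T:Mb-Bdry} and \ref{T:Lb-Bdry}, which is really the main conceptual content of the proposition.
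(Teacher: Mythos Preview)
Your argument is correct and follows essentially the same route as the paper's proof: the paper also fixes $\tau$, uses the continuity and monotonicity of $\theta(\tau,\cdot)$ to produce a $\gamma$ with $\theta(\tau,\gamma)=p$ when $p\ge\theta(\tau,\tau)$, and then identifies this $\gamma$ with the infimum defining $\eta(\tau,p)$. You have simply made the appeal to the Intermediate Value Theorem and the ``smallest/largest solution'' selection explicit, whereas the paper compresses these steps into the phrase ``then obviously.''
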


\begin{proof}
The relationships between $\bM$ and $\bL$ in Theorem \ref{T:MLRel} translate in $\theta$--$\eta$ language to
	\begin{equation}\label{E:et}
	\eta(\tau, p) = \inf\{\gamma \geq \tau: \: \theta(\tau, \gamma) \geq p\} \:\:\text{ and }\:\:
	\theta(\tau, \gamma) = \sup\{0 \leq p \leq 1: \eta(\tau, p) \leq \gamma\}.
	\end{equation}
Now fix some $0 \leq \tau \leq 1$. If $p < \theta(\tau, \tau)$ (below the obstacle condition curve for $\eta$), then
$\eta(\tau, p) = \tau$ and $\theta(\tau, \gamma) \geq \theta(\tau, \tau) > p$ for all $\gamma \geq \tau$,
so indeed $\gamma = \tau$ is the smallest possible value of $\gamma$ where $\theta(\tau, \gamma) \geq p$.
If, on the other hand, $1 \geq p \geq \theta(\tau, \tau)$, or $(\tau, p) \in \widetilde{\Omega_{\eta}}$, 
then there exists a $\gamma \geq \tau$ such that
$\theta(\tau, \gamma) = p$. So, in this case, we may rewrite the first equation in \eqref{E:et} as
	$$\eta(\tau, p) = \inf\{\gamma \geq \tau: \theta(\tau, \gamma) = p\},$$
and then obviously
	\begin{equation}\label{E:te-Equals}
	\theta(\tau, \eta(\tau, p)) = p \text{, for all } (\tau, p) \in \widetilde{\Omega_{\eta}}.
	\end{equation}
This is exactly \eqref{E:ML-Equals}. Similarly, we have that
	\begin{equation}\label{E:et-Equals}
	\eta(\tau, \theta(\tau, \gamma)) = \gamma \text{ for all } (\tau, \gamma) \in \widetilde{\Omega_{\theta}}.
	\end{equation}
\end{proof}

\section{The Sharp Inequality for The Square Function}
\label{S:Sharp}

The following result is an adaptation of Lemma 2 in Bollobas \cite{Bollobas}.

\begin{theorem}\label{T:Ineq-Bol}
The functions $\bM$ and $\bL$ satisfy:
	\begin{equation}\label{E:Ineq-Bol}
	\bM(0, F, \lambda) \leq \frac{F}{\bL(0, 1, \lambda)} = \frac{F}{\bL_b(0, \lambda)},
	\end{equation}
for all $F\geq 0$ and $\lambda > 0$.
\end{theorem}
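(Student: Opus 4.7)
\medskip

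\noindent\textbf{Proof plan.} The plan is to adapt Bollobas's Lemma 2 by constructing, from any admissible function $\varphi$ for $\bM(0, F, \lambda)$, a new function $\tilde\varphi$ that is admissible for $\bL(0, 1, \lambda) = \bL_b(0, \lambda)$ on a dyadic interval of length $|E|$ and whose $L^1$ norm is bounded by $\int_J|\varphi|$. Once this is in hand, the definition of $\bL_b$ as an infimum forces $|E|\bL_b(0, \lambda) \leq \int_J|\varphi|$, and taking the supremum over admissible $\varphi$ yields the theorem.

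First I would fix an admissible $\varphi$ supported on a dyadic interval $J$, with $\langle\varphi\rangle_J = 0$ and $\langle|\varphi|\rangle_J = F$, reducing to the case of finite Haar expansion as in the proof of Theorem \ref{T:bM-LstSup}. Set $E := \{x \in J: S_J^2\varphi(x) \geq \lambda\}$ and introduce the stopping family $\mathcal{F}$ of maximal dyadic intervals $I \subseteq J$ such that
$\sum_{K:\, I \subseteq K \subseteq J}(\varphi, h_K)^2/|K| \geq \lambda.$
Because $\varphi$ has finite Haar expansion, $\mathcal{F}$ is a finite antichain and $E = \bigsqcup_{I \in \mathcal{F}} I$, so $|E| = \sum_{I \in \mathcal{F}}|I|$. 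For each $I \in \mathcal{F}$ set $\mu_I := \sum_{K:\, I \subsetneq K \subseteq J}(\varphi, h_K)^2/|K|$. By maximality $\mu_I < \lambda$ (with $\mu_J = 0$), and the shifted restriction $\varphi|_I - \langle\varphi\rangle_I$ has mean zero on $I$ and satisfies $S_I^2(\varphi|_I - \langle\varphi\rangle_I) \geq \lambda - \mu_I$ almost everywhere on $I$.

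The main technical step is to build $\tilde\varphi$ on a dyadic interval $\tilde J$ with $|\tilde J| = |E|$ that obeys $\langle\tilde\varphi\rangle_{\tilde J} = 0$, $S_{\tilde J}^2\tilde\varphi \geq \lambda$ almost everywhere, and $\int_{\tilde J}|\tilde\varphi| \leq \int_J|\varphi|$. The idea is to pack the pieces $\{\varphi|_I - \langle\varphi\rangle_I\}_{I \in \mathcal{F}}$ into subintervals $\tilde I \subseteq \tilde J$ of matching length $|\tilde I| = |I|$, and to supplement each piece with Haar coefficients at the scales $K' \supsetneq \tilde I$ in $\tilde J$ contributing precisely $\mu_I$ worth of square-function mass on $\tilde I$. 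These ``connective'' Haar coefficients are prescribed using the original means $\langle\varphi\rangle_I$ in a tree-consistent way so that the extra $L^1$ mass they introduce is offset by the mass $\int_{J \setminus E}|\varphi|$ that is discarded from the complement of $E$. This combinatorial construction is the main obstacle of the proof and requires careful dyadic bookkeeping in order to guarantee simultaneously the mean-zero condition, the uniform lower bound on $S_{\tilde J}^2\tilde\varphi$, and the $L^1$ norm inequality.

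Once $\tilde\varphi$ is in place, admissibility and the definition of $\bL(0, 1, \lambda) = \bL_b(0, \lambda)$ as an infimum give $|\tilde J|\bL_b(0, \lambda) \leq \int_{\tilde J}|\tilde\varphi|$. Chaining this with $|\tilde J| = |E|$ and $\int_{\tilde J}|\tilde\varphi| \leq \int_J|\varphi| = |J|F$ yields $|E|/|J| \leq F/\bL_b(0, \lambda)$. Since $|E|/|J|$ may be chosen arbitrarily close to $\bM(0, F, \lambda)$, we conclude $\bM(0, F, \lambda) \leq F/\bL(0, 1, \lambda)$, as claimed.
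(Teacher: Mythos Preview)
Your approach is genuinely different from the paper's, but the construction you defer to ``careful dyadic bookkeeping'' runs into a real obstruction. The stopping family $\mathcal{F}$ consists of dyadic intervals of assorted sizes, and in general there is no dyadic interval $\tilde J$ of length $|E|=\sum_{I\in\mathcal F}|I|$ at all: if, say, $\mathcal{F}$ contains one interval of size $|J|/2$ and one of size $|J|/4$, then $|E|=3|J|/4$ is not a dyadic length, and no dyadic tree on any interval admits a partition into exactly two dyadic subintervals of those sizes. Since the square function is dilation-invariant but the $L^1$ norm is not, you cannot rescale the pieces to force a fit without losing the $L^1$ bound. Even setting the packing issue aside, the claim that the connective Haar coefficients can be chosen to contribute exactly $\mu_I$ on each $\tilde I$ while keeping $\int_{\tilde J}|\tilde\varphi|\leq\int_J|\varphi|$ is left entirely to faith: the quantities $\langle\varphi\rangle_I$ and $\mu_I$ are not related by any usable inequality, so the asserted offset against $\int_{J\setminus E}|\varphi|$ has no evident mechanism.

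The paper sidesteps all of this by staying on the \emph{original} interval $J$ and iterating. Writing $p=|E|/|J|$, it places on each ``bad'' last-generation interval (where $S_J^2\varphi<\lambda$) a rescaled copy of $\varphi$ on each half; one such step produces $\varphi_1$ with $\langle|\varphi_1|\rangle_J\leq\langle|\varphi|\rangle_J\bigl(1+(1-p)\bigr)$ and $\frac{1}{|J|}|\{S_J^2\varphi_1\geq\lambda\}|\geq p\bigl(1+(1-p)\bigr)$. Repeating yields geometric series in $(1-p)$ summing to $1/p$, so the limit $\tilde\varphi$ lives on $J$, has $S_J^2\tilde\varphi\geq\lambda$ a.e., and $\langle|\tilde\varphi|\rangle_J\leq\langle|\varphi|\rangle_J/p$. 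This self-similar filling-in needs no packing and no control of the stopped means; it trades your single compression step for a convergent iteration on the fixed interval.
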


\begin{proof}
Let $\varphi$ be a function on $J \in \cD$ with $\int_J\varphi = 0$ and finite Haar expansion (up to some dyadic level $N \geq 0$):
	$$\varphi = \sum_{I\subset J} (\varphi, h_I) h_I = \sum_{k=1}^{2^{N+1}-1} a_k h_{J_{k}},$$
where in the last term we are keeping track of the ordering in the Haar system adapted to $J$, as in Remark \ref{R:ClassicHaar}.
Fix some $\lambda > 0$ and let
	$$p:= \frac{1}{|J|}|\{x \in J: S_J^2\varphi(x) \geq \lambda\}| =: c\La|\varphi|\Ra_J,$$
and suppose that $0<p<1$. Put the intervals in the last generation $J_{(N)}$ into two (``good'' and ``bad'') categories: 
	$$J_{(N)} = \cI_g \cup \cI_b,$$
where $\cI_g$ is the collection of intervals $I \in J_{(N)}$ with $S_J^2\varphi \geq \lambda$ on $I$, and $\cI_b$ are the remaining ones where $S_J^2\varphi < \lambda$. Then clearly
	$$\left| \cup_{I\in\cI_g} I\right| = p|J| \text{ and } \left| \cup_{I\in\cI_b} I\right| = (1-p)|J|.$$

Now, for each $I \in \cI_b$, let the function:
	$$\psi_I := \sum_{k=1}^{2^{N+1}-1} \frac{1}{\sqrt{2^{N+1}}} a_k h_{I_k^{-}} +
		\sum_{k=1}^{2^{N+1}-1} \frac{1}{\sqrt{2^{N+1}}} a_k h_{I_k^{+}},$$
where each $\{h_{I_k^{-}}\}$ and $\{h_{I_k^{+}}\}$ denote the (ordered) Haar systems adapted to $I_{-}$ and $I_{+}$, respectively.
Essentially, this amounts to
	$$\psi_I = \unit_{I_{-}}\psi_{I_{-}} + \unit_{I_{+}}\psi_{I_{+}},$$
where each $\psi_{I_{\pm}}$ is a \textit{copy of $\varphi$ adapted to $I_{\pm}$}, so 
	$$\La|\psi_{I_{\pm}}|\Ra_{I_{\pm}} = \La|\psi_I|\Ra_I = \La|\varphi|\Ra_J.$$
Now, let
	$$\varphi_1 := \varphi + \sum_{I \in \cI_b} \psi_I.$$
Then $\int_J\varphi_1 = 0$, and
	$$\La|\varphi_1|\Ra_J \leq \La|\varphi|\Ra_J \big(1 + (1-p)\big).$$
The square function $S_J^2\varphi_1$ equals $S_J^2\varphi$ on $\cup_{I\in\cI_g}I$, while on any $I \in \cI_b$:
	$$
	|\{x\in I : S_J^2\varphi_1(x) \geq \lambda\}| \geq |I_{-}|p + |I_{+}|p = |I|p.
	$$
So $\varphi_1$ satisfies
	$$
	\frac{1}{|J|}|\{x \in J: S_J^2\varphi_1(x) \geq \lambda\}| \geq p\big(1+ (1-p)\big).
	$$
Continuing this process, we obtain a sequence of functions $\{\varphi_n\}_n$, supported on $J$, each with $\int_J\varphi_n = 0$ and
	$$\frac{1}{|J|}|\{x\in J: S_J^2\varphi_n(x) \geq \lambda\}| \geq p\big(1 + (1-p) + \ldots + (1-p)^n\big) 
	\xrightarrow[n\rightarrow\infty]{} 1,$$	
and
	$$\La|\varphi_n|\Ra_J \leq \La|\varphi|\Ra_J \big(1 + (1-p) + \ldots + (1-p)^n\big) 
	\xrightarrow[n\rightarrow\infty]{} \frac{1}{p} \La|\varphi|\Ra_J.$$
Letting $\widetilde{\varphi} = \lim \varphi_n$ in $L^1$, we have
	$$\La\widetilde{\varphi}\Ra_J = 0, \:\: \La|\widetilde{\varphi}|\Ra_J \leq \frac{1}{p}\La|\varphi|\Ra_J, \:\:
		S_J^2\widetilde{\varphi} \geq \lambda \text{ a.e. on } J. $$
Therefore $\widetilde{\varphi}$ is admissible for $\bL(0,1,\lambda)$, so
	$$ \bL(0,1,\lambda) \leq  \La|\widetilde{\varphi}|\Ra_J \leq \frac{1}{p}\La|\varphi|\Ra_J = \frac{1}{c}.$$
We then have that
	$$\frac{1}{|J|}|\{x\in J: S_J^2\varphi(x) \geq\lambda\}| \leq \frac{\La|\varphi|\Ra_J}{\bL(0,1,\lambda)},$$
for all $\varphi$ on $J$ with mean zero, and all $\lambda > 0$, which yields exactly \eqref{E:Ineq-Bol}.
\end{proof}

Next, we find the values of $\bM$ and $\bL$ for $f = 0$.

\begin{prop}\label{P:Bdryf0}
If $f = 0$,  the functions $\bM$ and $\bL$ are given by:
	\begin{equation} \label{E:Bdryf0-bM}
	\bM(0, F, \lambda) = \left\{ \begin{array}{ll}
		\frac{F}{\bL(0,1,\lambda)} = \frac{F}{\sqrt{\lambda}}\Psi(1), & \text{ if } F \leq \frac{\sqrt{\lambda}}{\Psi(1)}\\
		1, & \text{ if } F > \frac{\sqrt{\lambda}}{\Psi(1)}.
	\end{array}\right.
	\end{equation}
and
	\begin{equation} \label{E:Bdryf0-bL}
	\bL(0, p, \lambda) = p \: \bL(0,1,\lambda) = \frac{p\sqrt{\lambda}}{\Psi(1)}.
	\end{equation}
\end{prop}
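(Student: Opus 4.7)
The plan is to combine Bollobas's inequality (Theorem \ref{T:Ineq-Bol}), the duality relations between $\bM$ and $\bL$ in Theorem \ref{T:MLRel}, convexity of $\bL$ in $p$, and the explicit value of $\bL_b$ from Theorem \ref{T:Lb-Bdry}. The strategy is: first fix the value of $\bL$ at the corner $(f,p)=(0,1)$; use it in Theorem \ref{T:Ineq-Bol} to get a sharp upper bound on $\bM(0,F,\lambda)$; dualize to get a sharp lower bound on $\bL(0,p,\lambda)$; match it with an upper bound from convexity; then dualize back to pin down $\bM(0,F,\lambda)$ exactly.

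First I would record the value $\bL(0,1,\lambda) = \bL_b(0,\lambda)$. Evaluating Theorem \ref{T:Lb-Bdry} at $f=0$ gives $\bL(0,1,\lambda) = \sqrt{\lambda}\,\Psi(0)/\Psi(1) = \sqrt{\lambda}/\Psi(1)$, since $\Psi(0)=1$. Substituting into Theorem \ref{T:Ineq-Bol} yields the upper bound $\bM(0,F,\lambda)\leq F\,\Psi(1)/\sqrt{\lambda}$, which together with $\bM\leq 1$ gives $\bM(0,F,\lambda)\leq \min\bigl(F\,\Psi(1)/\sqrt{\lambda},\,1\bigr)$.

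Next I would establish formula \eqref{E:Bdryf0-bL}. The relation \eqref{E:ML-rel} of Theorem \ref{T:MLRel} identifies $\bL(0,p,\lambda)$ as the smallest $F\geq 0$ with $\bM(0,F,\lambda)\geq p$; combined with the upper bound on $\bM$ just obtained, the inequality $\bM(0,F,\lambda)\geq p$ forces $F\geq p\sqrt{\lambda}/\Psi(1)$, so $\bL(0,p,\lambda)\geq p\sqrt{\lambda}/\Psi(1)$. For the reverse inequality, convexity of $\bL(0,\cdot,\lambda)$ in $p$ (property 8 of Proposition \ref{P:bL-Props}), together with the endpoint values $\bL(0,0,\lambda)=0$ from \eqref{E:bL-p0} and $\bL(0,1,\lambda)=\sqrt{\lambda}/\Psi(1)$, yields $\bL(0,p,\lambda)\leq (1-p)\cdot 0 + p\cdot \sqrt{\lambda}/\Psi(1)$, matching the lower bound and proving \eqref{E:Bdryf0-bL}.

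Finally, \eqref{E:Bdryf0-bM} follows by feeding the closed-form expression $\bL(0,p,\lambda)=p\sqrt{\lambda}/\Psi(1)$ into the dual relation \eqref{E:LM-rel}, which expresses $\bM(0,F,\lambda)$ as the largest $p\in[0,1]$ with $\bL(0,p,\lambda)\leq F$; the supremum is precisely $\min\bigl(F\,\Psi(1)/\sqrt{\lambda},\,1\bigr)$, confirming the formula. There is no real obstacle here — the ingredients have all been established in the earlier sections — but one must be careful to apply the duality in the correct direction, so that the sharp Bollobas-type upper bound for $\bM$ produces the sharp lower bound for $\bL$, and then the (now exact) value of $\bL$ produces the exact value of $\bM$.
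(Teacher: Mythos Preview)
Your proof is correct, and the ingredients are the same as the paper's, but you route them in the ``dual'' order. The paper first pins down $\bM(0,F,\lambda)$ directly: knowing $\bM(0,0,\lambda)=0$ and (from the optimal obstacle condition, i.e.\ Theorem \ref{T:Lb-Bdry}) that $\bM(0,F,\lambda)=1$ once $F\geq \sqrt{\lambda}/\Psi(1)$, \emph{concavity of $\bM$ in $F$} forces $\bM(0,F,\lambda)\geq F\Psi(1)/\sqrt{\lambda}$ as the secant line, which meets Bollobas's upper bound; then \eqref{E:Bdryf0-bL} drops out of Proposition \ref{P:ML-Equals}. You instead pass Bollobas through \eqref{E:ML-rel} to get a lower bound on $\bL(0,p,\lambda)$, and then match it with the secant line from \emph{convexity of $\bL$ in $p$} between $\bL(0,0,\lambda)=0$ and $\bL(0,1,\lambda)=\sqrt{\lambda}/\Psi(1)$; finally you recover $\bM$ from \eqref{E:LM-rel}. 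Either secant-line argument works; the paper's is marginally shorter since it nails $\bM$ in one step without first dualizing Bollobas, but your path has the virtue of using only the basic relations of Theorem \ref{T:MLRel} rather than the refined Proposition \ref{P:ML-Equals}.
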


\begin{proof}
Consider $\gamma \mapsto \theta(0, \gamma)$. We know that $\theta(0,0)=0$ and 
$\theta(0,\gamma)=1$ for all $\gamma \geq \frac{1}{\Psi(1)}$ (see Figure \ref{fig1}).
But $\theta$ is \textit{concave} in $\gamma$, so $\theta(0, \cdot)$ lies above its secant line
between $(0,0,0)$ and $(0,\frac{1}{\Psi(1)}, 1)$. This line has equation $y(\gamma) = \Psi(1)\gamma$,
so
	$$\theta(0,\gamma) \geq \Psi(1)\gamma \text{, for all } 0 \leq \gamma \leq \frac{1}{\Psi(1)}.$$
But Theorem \ref{T:Ineq-Bol} says that $\theta(0, \gamma) \leq \Psi(1)\gamma$, so then
	\begin{equation*}
	\theta(0, \gamma) = \left\{\begin{array}{ll}
		\Psi(1)\gamma, & \text{ if } 0 \leq \gamma \leq \frac{1}{\Psi(1)}\\
		1, & \text{ otherwise.}
	\end{array}\right.
	\end{equation*}
Now let $p \in [0,1]$. Then $p = \theta(0, \gamma)$ for $\gamma = \frac{p}{\Psi(1)}$. Then by \eqref{E:et-Equals},
	$$\eta(0,p) = \eta(0, \theta(0, \gamma)) = \gamma = \frac{p}{\Psi(1)},$$
proving that
	$$\eta(0,p) = \frac{p}{\Psi(1)}.$$
\end{proof}

\begin{corollary}\label{C:SharpC}
The sharp constant $C$ in the inequality
	$$\frac{1}{|J|}|\{x\in J: S_J^2\varphi(x) \geq \lambda\}| \leq C \frac{1}{\sqrt{\lambda}}\La|\varphi|\Ra_J
	\text{, for all } \varphi \in L^1(J), \: J \in \cD,$$
is given by $C = \Psi(1)$.
\end{corollary}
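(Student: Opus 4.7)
My plan is to assemble the three ingredients already proved in the paper into a short chain of inequalities; the statement of Corollary \ref{C:SharpC} is essentially a repackaging of Proposition \ref{P:Bdryf0} once one remembers how $\bM$ controls the left-hand side. The starting observation is that, by the very definition of $\bM$, for any $\varphi \in L^1(J)$ with $f := \La\varphi\Ra_J$ and $F := \La|\varphi|\Ra_J$ one has
$$
\frac{1}{|J|}\big|\{x \in J : S_J^2\varphi(x) \geq \lambda\}\big| \leq \bM(f, F, \lambda).
$$
So the task reduces to proving $\bM(f, F, \lambda) \leq \Psi(1)\, F/\sqrt{\lambda}$.

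First I would eliminate the variable $f$ using property \textbf{9).} of Proposition \ref{P:bM-Props}, which gives $\bM(f, F, \lambda) \leq \bM(0, F, \lambda)$. Next I would invoke the explicit formula \eqref{E:Bdryf0-bM} from Proposition \ref{P:Bdryf0}: in the regime $F \leq \sqrt{\lambda}/\Psi(1)$ one has $\bM(0, F, \lambda) = \Psi(1)\, F/\sqrt{\lambda}$ exactly, while in the complementary regime $\bM(0, F, \lambda) = 1 \leq \Psi(1)\, F/\sqrt{\lambda}$ is automatic since $\Psi(1)\, F/\sqrt{\lambda} > 1$ there. Concatenating these two bounds gives the claimed inequality with $C = \Psi(1)$.

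For the sharpness — the genuinely nontrivial half of a ``sharp constant'' statement — I would exploit that $\bM$ is defined as a supremum and that equality $\bM(0, F, \lambda) = \Psi(1)\, F/\sqrt{\lambda}$ is attained on a nondegenerate set. Fix any $F \in (0, \sqrt{\lambda}/\Psi(1)]$ and any $\epsilon > 0$; by the definition of $\bM$ there exists an admissible $\varphi$ on some $J \in \cD$ with $\La\varphi\Ra_J = 0$, $\La|\varphi|\Ra_J = F$, and
$$
\frac{1}{|J|}\big|\{x \in J : S_J^2\varphi(x) \geq \lambda\}\big| > \frac{\Psi(1)\, F}{\sqrt{\lambda}} - \epsilon = \Psi(1) \cdot \frac{\La|\varphi|\Ra_J}{\sqrt{\lambda}} - \epsilon.
$$
This rules out any constant strictly smaller than $\Psi(1)$.

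I would not expect any genuine obstacle at this stage: the machinery is all in place, and the corollary is a three-line assembly. The only subtle point worth noting in the write-up is that the upper bound combines two cases for $F$ (above and below $\sqrt{\lambda}/\Psi(1)$), and that the sharpness direction relies on the equality case in the concavity argument of Proposition \ref{P:Bdryf0}, which in turn rests on the Bollob\'as-style lemma, Theorem \ref{T:Ineq-Bol}.
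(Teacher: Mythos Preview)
Your proposal is correct and follows essentially the same approach as the paper: reduce to $f=0$ via Proposition~\ref{P:bM-Props}~\textbf{9).}, then read off the answer from the explicit formula \eqref{E:Bdryf0-bM} in Proposition~\ref{P:Bdryf0}. You spell out the two regimes for $F$ and the sharpness direction more explicitly than the paper does (it compresses everything into the single line $C=\sup \bM(f,F,\lambda)\sqrt{\lambda}/F=\sup \bM(0,F,\lambda)\sqrt{\lambda}/F=\Psi(1)$), but the logic is identical.
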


\begin{proof}
Obviously
	\begin{equation}\label{E:Sharp-C}
	C = \sup_{f, F, \lambda} \frac{\bM(f, F, \lambda) \sqrt{\lambda}}{F} = 
	\sup_{F, \lambda} \frac{\bM(0, F, \lambda) \sqrt{\lambda}}{F} = \Psi(1),
	\end{equation}
where the second equality follows since $\bM(f, F, \lambda) \leq \bM(0, F, \lambda)$,
and the last equality follows from \eqref{E:Bdryf0-bM}.
\end{proof}

\section{Proofs of the Boundary Values $\bM_b$ and $\bL_b$ of $\bM$ and $\bL$}\label{S:Boundaries}

In this section we prove Theorems \ref{T:Mb-Bdry} and \ref{T:Lb-Bdry}.

\subsection{The boundary case $\bM_b(f, \lambda)$.}

Recall that
	$$\bM_b(f, \lambda) := \sup \frac{1}{|J|} |\{x\in J: S_J^2\varphi(x) \geq \lambda\}|, \forall f \geq 0, \lambda > 0,$$
where the supremum is over all functions $\varphi$ on $J$ with $\varphi \geq 0$ a.e. and $\La\varphi\Ra_J = f$.
Then $\bM_b$ has the obvious properties:
	\begin{itemize}
	\item Domain: $\Omega^{+}_{\bM_b} = \{f \geq 0, \lambda > 0\}$; Range: $0 \leq \bM_b \leq 1$;
	\item $\bM_b$ is decreasing in $\lambda$;
	\item Homogeneity: $\bM_b(f, \lambda) = \bM_b(tf, t^2\lambda)$, for all $t>0$;
	\item Obstacle Condition: $\bM_b(f, \lambda) = 1$, for all $f \geq \sqrt{\lambda}$;
	\item Boundary Condition:	$\bM_b(0, \lambda) = 0$, for all $\lambda > 0$;
	\item Main Inequality: For any pairs in the domain with $f = \frac{1}{2}(f_{+}+f_{-})$, $\lambda = \min\{\lambda_{\pm}\}$:
		\begin{equation}\label{E:M-MI}
		\bM_b\bigg(f, \lambda + \left(\frac{f_{+}-f_{-}}{2}\right)^2\bigg) \geq \frac{1}{2}
			\big(\bM_b(f_{+}, \lambda_{+}) + \bM_b(f_{-}, \lambda_{-}) \big);
		\end{equation}
	\item $\bM_b$ is concave and non-decreasing in $f$;
	\item Least Supersolution: If $m(f, \lambda)$ is a continuous non-negative function on $\Omega^{+}_{\bM_b}$ which satisfies
		\eqref{E:M-MI} and the obstacle condition, then $\bM_b \leq m$. 
	\end{itemize}

Rewrite the Main Inequality \eqref{E:M-MI} in a more convenient form:
	\begin{equation}
	\label{discrM}
	\bM_b(f, \lambda) \geq \frac{1}{2}\big( \bM_b(f-a, \lambda - a^2) + \bM_b(f+a, \lambda - a^2)\big), 
		\forall f\geq a \geq 0, \lambda > a^2, 
		\end{equation}
		Using homogeneity of $\bM_b$, we put:
	$$
	\bM_b(f, \lambda) = \bM_b(f/\sqrt{\lambda}, 1) =: \alpha(\tau) \text{, where } \tau := \frac{f}{\sqrt{\lambda}}.
	$$
	Then from \eqref{E:bM-F0},
	$$
	\alpha : [0, \infty) \rightarrow [0, 1] \text{ with }
	 \alpha(0) = 0 \text{ and } \alpha(\tau) = 1, \forall \tau \geq 1. 
	$$

	We can rewrite inequality  \eqref{discrM} in terms of $\alpha$  as follows ($h:= \frac{a}{\sqrt{\la}}$, $ \tau := \frac{f}{\sqrt{\lambda}}$):
	\begin{equation}
	\label{discrALPHA}
	\frac12\Big[ \alpha\Big(\frac{\tau - h}{\sqrt{1-h^2}}\Big) + \alpha\Big(\frac{\tau + h}{\sqrt{1-h^2}}\Big) \Big] - \alpha(\tau) \le 0\,.
	\end{equation}	
	Since $\bM_b$ is concave in the first variable, we know that $\alpha$ is concave.
	We will now use the second order   a.e. Taylor formula for concave functions
	from \cite{EG}:
\begin{equation}
\label{concA}
F(\tau+\eps) = F(\tau) + F'(\tau) \eps + \frac12 F''(\tau) \eps^2 + 
o(\eps^2), \quad \eps\to 0, \quad \text{for a.e.}\,\,x\,,
\end{equation}
We use this formula in conjunction with \eqref{discrALPHA}. We also use the  expansions:
$$
\frac{\tau \pm h}{\sqrt{1-h^2}} - \tau= \pm h +\frac12 \tau  h^2 + o(h^2)=:\eps\,.
$$

The inequality \eqref{discrALPHA} then obviously implies the following inequality valid a.e.:
	\begin{equation}\label{E:alpha-DI}
	\tau \alpha'(\tau) + \alpha''(\tau) \leq 0.
	\end{equation}
	
	 But function $\alpha$ is concave. 
In particular, it is everywhere defined and continuous, and its derivative $\alpha'$ is precisely its distributional derivative, and it is everywhere defined
decreasing function.  Let $(\alpha)''$ denote the distributional derivative of decreasing function $\alpha'$. Thus it is a non-positive measure. We denote its singular part by symbol $\sigma_s$. 
Hence, in the sense of distributions
\begin{equation}\label{E:alpha-DI-distr}
	\tau \alpha'd\tau + (\alpha)'' = (\tau \alpha(\tau) + \alpha''(\tau) )d\tau + \sigma_s \leq 0.
	\end{equation}

Let us look at the differential equation $\tau y'(\tau) + y''(\tau) = 0$ for $\tau \geq 0$. The general solution is:
	$$y(\tau) = C\Phi(\tau) + D \text{, where } \Phi(\tau) := \int_{0}^{\tau} e^{-x^2/2}\,dx.$$
Imposing $y(0) = 0$ and $y(1) = 1$, we obtain an obvious candidate for our function $\alpha$:
	\begin{equation}\label{E:alpha-candidate}
	y(\tau) = \left\{ \begin{array}{ll}
	\frac{\Phi(\tau)}{\Phi(1)}, & 0 \leq \tau \leq 1\\
	1,  & \tau \geq 1.
	\end{array}\right.
	\end{equation}
The first thing we should check is that the function obtained this way, namely $m(f, \lambda) := y(\tau)$ satisfies the (discrete) main inequality \eqref{E:M-MI} of the function $\bM_b$. This is the content of the following lemma, which we prove shortly:
	
	\begin{lemma}\label{L:alphaMI}
	The function $m(f, \lambda) = y(\tau)$, where $\tau = f/\sqrt{\lambda}$ and $y$ is the function in \eqref{E:alpha-candidate},
	is a supersolution for \eqref{E:M-MI}.
	\end{lemma}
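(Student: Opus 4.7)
The plan is to reduce the Main Inequality~\eqref{E:M-MI} to a single finite-difference inequality for $\Phi$ and then verify it by a case analysis driven by the ODE $\alpha'' + \tau\alpha' = 0$, which $\alpha := \Phi/\Phi(1)$ satisfies exactly. Non-negativity, continuity, and the obstacle condition $m(f,\lambda) = 1$ for $f \geq \sqrt{\lambda}$ are all immediate from the formula~\eqref{E:alpha-candidate}.

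First I normalize $\lambda = 1$ by homogeneity. Since $y$ is non-decreasing, $m$ is non-increasing in $\lambda$, so replacing $\lambda_\pm$ by $\min(\lambda_+,\lambda_-)$ only weakens the right-hand side and reduces the problem to $\lambda_+ = \lambda_- = 1$. Writing $f_\pm = f \pm a$ with $0 \leq a \leq f$, it then suffices to prove
\[
m(f, 1+a^2) \;\geq\; \tfrac{1}{2}\bigl(m(f+a,1) + m(f-a,1)\bigr).
\]
I then split according to where $f/\sqrt{1+a^2}$, $f+a$, and $f-a$ lie relative to the obstacle threshold $\tau = 1$. The cases $f \geq \sqrt{1+a^2}$ and $f-a \geq 1$ are immediate (both sides equal $1$, or the right-hand side is bounded by $1$ while the left equals $1$). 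The remaining non-trivial cases are the \emph{interior case} $f + a \leq 1$, where the inequality reduces to $2\Phi(f/\sqrt{1+a^2}) \geq \Phi(f+a) + \Phi(f-a)$, and the \emph{mixed case} $f - a \leq 1 < f+a$ with $f \leq \sqrt{1+a^2}$, where $m(f+a,1) = 1$ and the inequality becomes $2\Phi(f/\sqrt{1+a^2}) \geq \Phi(1) + \Phi(f-a)$.

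In the interior case, the ODE encodes the infinitesimal form $m_{ff} = 2 m_\lambda$ exactly, so the difference $h(a) := 2\Phi(f/\sqrt{1+a^2}) - \Phi(f+a) - \Phi(f-a)$ vanishes through order $a^3$ at $a = 0$, with non-negative fourth-order Taylor coefficient (computable as $\tfrac{f}{6}(3-f^2)e^{-f^2/2}$, positive because $f \leq 1 < \sqrt{3}$ throughout the interior case). To upgrade this to a global bound on $h$, I would study $h'(a)$ via the identity $e^{-(f-a)^2/2} - e^{-(f+a)^2/2} = 2e^{-(f^2+a^2)/2}\sinh(fa)$ together with the elementary estimate $\sinh(fa) \geq fa$. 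The main obstacle will be the mixed case, where the truncation activates at $f+a$ and the ODE-based cancellation is broken. There I would analyze the two-variable function $H(f,a) := 2\Phi(f/\sqrt{1+a^2}) - \Phi(1) - \Phi(f-a)$ on its domain: along the curve $f+a = 1$, $H$ coincides with the interior-case $h$ and is therefore non-negative by the argument above; along the curve $f = \sqrt{1+a^2}$, one has $H = \Phi(1) - \Phi(\sqrt{1+a^2}-a) \geq 0$ since $\sqrt{1+a^2} - a \leq 1$ for $a \geq 0$; and non-negativity of $H$ in the interior of the mixed-case region would then follow from a monotonicity analysis in one of the variables, using the explicit formulas for $\partial_a H$ and $\partial_f H$ in terms of Gaussian exponentials.
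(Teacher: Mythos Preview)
Your reduction and case split are correct, and the Taylor remark (vanishing of $h$ through order $a^3$, positive fourth coefficient) is a valid local check. But the argument is not complete at the two places that matter. In the interior case you never establish $h(a)\ge 0$ globally: the Taylor estimate is only local, and the proposed analysis of $h'(a)$ via $\sinh(fa)\ge fa$ is stated as an intention (``I would study\ldots''), not carried out. The mixed case is likewise only a sketch --- boundary values of $H$ plus an unspecified ``monotonicity analysis in one of the variables''.

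The paper closes both gaps with a single monotonicity argument, in a slightly different normalization. Writing $\tau=f/\sqrt{\lambda}$ and $x=a/\sqrt{\lambda}$ (so the central point sits at $\lambda$-level $1$ and the side points at $1-x^2$), the target inequality becomes
\[
2\Phi(\tau)\ \ge\ \Phi\!\left(\frac{\tau+x}{\sqrt{1-x^2}}\right)+\Phi\!\left(\frac{\tau-x}{\sqrt{1-x^2}}\right),\qquad 0\le x\le\tau<1,
\]
and one shows the right side is non-increasing in $x$: the sign of its derivative reduces to the elementary inequality $\frac{1+\tau x}{1-\tau x}\le\exp\!\big(\frac{2\tau x}{1-x^2}\big)$, which holds because $G(x):=\frac{2\tau x}{1-x^2}-\log\frac{1+\tau x}{1-\tau x}$ satisfies $G(0)=0$ and $G'(x)\ge 0$. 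Note this is proved for \emph{all} $0\le x\le\tau<1$, with no upper bound on $(\tau+x)/\sqrt{1-x^2}$. The mixed case is then one line: if $X^+:=(\tau+x)/\sqrt{1-x^2}\ge 1$, monotonicity of $\Phi$ gives $\Phi(1)\le\Phi(X^+)$, so $\Phi(X^-)+\Phi(1)\le\Phi(X^-)+\Phi(X^+)\le 2\Phi(\tau)$. Your separate two-variable analysis of $H$ is unnecessary; the extra work stems from having restricted the interior inequality to $f+a\le 1$ when it in fact holds throughout.
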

	
Obviously, this gives us that $M(f, \lambda) \leq m(f, \lambda)$. To see that we have,  in fact, equality, we consider a new variable:
	$$
	S := \Phi(\tau),
	$$
and observe that for a function $g$:
	\begin{equation}\label{E:DI-Svar}
	\big( \tau g'(\tau) + g''(\tau) \big) e^{\tau^2} = \frac{d^2g}{dS^2} = g_{SS}.
	\end{equation}
So \eqref{E:alpha-DI} is equivalent to $\alpha_{SS} \leq 0$, or $\alpha$ being concave in the variable $S$. It is easy to see that:

\textit{If $g(S)$ is a concave non-negative function for $S \geq 0$, then the ratio $\frac{g(S)}{S}$ is non-increasing.}
	
\noindent Thus, if we put $\alpha(\tau) := g(S)$, we have that for all $0 \leq \tau \leq 1$:
	$$\frac{g(S)}{S} = \frac{\alpha(\tau)}{\Phi(\tau)} \geq \frac{g(\Phi(1))}{\Phi(1)} = \frac{\alpha(1)}{\Phi(1)}
	= \frac{1}{\Phi(1)},$$
which gives exactly that $\bM_b(f, \lambda) \geq m(f, \lambda)$. Therefore
	\begin{equation}\label{E:bM-Bdry}
	\bM_b(|f|, \lambda) = \bM(f, |f|, \lambda) = \left\{ \begin{array}{ll}
		\frac{\Phi\left(\frac{|f|}{\sqrt{\lambda}}\right)}{\Phi(1)}, & |f| < \sqrt{\lambda} \\
		1, & |f| \geq \sqrt{\lambda}.
	\end{array}\right.
	\end{equation}

\begin{proof}[Proof of Lemma \ref{L:alphaMI}]
We define the quantities:
	\begin{equation}\label{E:X-def}
	\Xp := \frac{\tau + x}{\sqrt{1 - x^2}} \text{ and } \Xn := \frac{\tau - x}{\sqrt{1 - x^2}},
	\end{equation}
for all $\tau \geq 0$, and $0 \leq x <1$, $x\leq \tau$. 
We claim that, for all $0 \leq x \leq \tau < 1$, the function $\Phi$ satisfies
		\begin{equation}\label{E:Phi-MI}
		2\Phi(\tau) \geq \Phi(\Xp) + \Phi(\Xn).
		\end{equation}
In what follows, suppose $\tau \in [0, 1)$ is fixed, and we wish to show that
	$$ 2\Phi(\tau) \geq g(x), \:\forall\: 0 \leq x \leq \tau \text{, where }
		g(x) := \Phi(\Xp) + \Phi(\Xn). $$
Since $g(0) = 2\Phi(\tau)$, it suffices to show that $g$ is non-increasing. We have
	$$ \frac{d}{dx}\Xp = \frac{1+\tau x}{(1-x^2)^{3/2}} \text{ and } \frac{d}{dx}\Xn = -\frac{1-\tau x}{(1-x^2)^{3/2}},$$
and then
	\begin{align*}
	g'(x) \leq 0 &\Leftrightarrow \frac{1+\tau x}{1 - \tau x} \leq e^{\frac{2\tau x}{1-x^2}} \\
		& \Leftrightarrow 0 \leq G(x) \text{, where } G(x) = \frac{2\tau x}{1-x^2} - 
			\log\left(\frac{1+\tau x}{1 - \tau x}\right).
	\end{align*}
Since $G(0) = 0$, it suffices to show that $G$ is non-decreasing. A simple computation shows that
	$$G'(x) = 2\tau \left( \frac{1+x^2}{(1-x^2)^2} - \frac{1}{1 - x^2\tau^2} \right) \geq 0,
	\:\: \forall\: 0 \leq x \leq \tau < 1.$$
This completes the proof for \eqref{E:Phi-MI}. 

Returning to Lemma \ref{L:alphaMI}, recall that we wish to show that
	$$2m(f, \lambda) \geq m(f+a, \lambda - a^2) + m(f-a, \lambda -a^2), \:\forall\: f \geq a \geq 0, \:
		\lambda > a^2,$$
where $m(f, \lambda) = y(\tau)$, and $y(\tau) = \min(\Phi(\tau)/\Phi(1), 1)$, for $\tau = \frac{f}{\sqrt{\lambda}} \geq 0$.
Using the homogeneity of $m$, we can rewrite this in terms of $y$. Moreover, letting
$x := \frac{a}{\sqrt{\lambda}}$, we have that $0 \leq x <1$ and also $x \leq \tau$, so we may use 
exactly the quantities $\Xp$ and $\Xn$ defined in \eqref{E:X-def} to rewrite the inequality we have to prove:
	\begin{equation}\label{E:m-sup-1} 
	2y(\tau) \geq y(\Xp) + y(\Xn), \:\forall\: \tau\geq 0, 0 \leq x <1, x\leq \tau.
	\end{equation}
If $\tau < 1$, then it is easy to see that $\Xn \leq \tau < 1$, so \eqref{E:m-sup-1} becomes
	$$2\Phi(\tau) \geq \Phi(\Xn) + \Phi(1) y(\Xp).$$
If $\Xp < 1$, this becomes exactly \eqref{E:Phi-MI}. If $\Xp \geq 1$, the inequality follows again by \eqref{E:Phi-MI} and monotonicity of $\Phi$:
	$$\Phi(\Xn) + \Phi(1) \leq \Phi(\Xn) + \Phi(\Xp) \leq 2\Phi(\tau).$$
Finally, when $\tau \geq 1$, $y(\tau) = 1$, and since $y \leq 1$ always, 
$2 = 2y(\tau) \geq y(\Xp) + y(\Xn)$.

\end{proof}

\subsection{The boundary case $\bL(f, 1, \lambda)$}
Define
	$$ \bL_b(f, \lambda) := \bL(f, 1, \lambda) = \inf\{\La|\varphi|\Ra_J: \text{supp}(\varphi) \subset J; \: \La \varphi\Ra_J = f; \:
		S_J^2\varphi \geq \lambda \text{ a. e. on } J\}.
	$$
Some of the obvious properties $\bL_b$ inherits from $\bL$ are:
	\begin{itemize}
	\item Domain: $\Omega_{\bL_b} := \{(f, \lambda): f\in\bR; \lambda > 0\}$;
	\item $\bL_b$ is increasing in $\lambda$ and even in $f$;
	\item Homogeneity: $\bL_b(tf, t^2\lambda) = |t| \bL_b(f, \lambda)$;
	\item Range/Obstacle Condition: $|f| \leq \bL_b(f, \lambda) \leq \max\{|f|, \sqrt{\lambda}\}$;
	\item Main Inequality:
		\begin{equation} \label{E:L-MI}
		2\bL_b(f, \lambda) \leq \bL_b(f-a, \lambda - a^2) + \bL_b(f + a, \lambda - a^2), \:\: \forall \: |a| < \sqrt{\lambda}.
		\end{equation}
	\item $\bL_b$ is convex in $f$, and recall from \eqref{E:bL-min0} that $\bL_b$ is minimal at $f = 0$:
		\begin{equation} \label{E:L-min0}
		\bL_b(0, \lambda) \leq \bL_b(f, \lambda), \:\: \forall \: f,
		\end{equation}
	therefore $\bL_b$ is non-decreasing in $f$ for $f\geq 0$, and non-increasing in $f$ for $f\leq 0$;
	\item Greatest Subsolution: If $\ell(f, \lambda)$ is any continuous non-negative function on $\Omega_{\bL_b}$ which satisfies the 
	main inequality
		\begin{equation}\label{MIL}
		2\ell(f, \la) \le \ell(f+a, \la-a^2) +\ell(f-a, \la-a^2)
		\end{equation}
	and the range condition $\ell(f, \lambda) \leq \max\{|f|, \lambda\}$, then
		$\ell \leq \bL_b$. See Remark \ref{R:GrtSub-Bdry}.
	\end{itemize}

Using homogeneity, we write
	\begin{equation}
	\label{discrL}
	\bL_b(f, \lambda) = \sqrt{\lambda} \bL_b\left(\frac{f}{\sqrt{\lambda}}, 1\right) =: \sqrt{\lambda} b(\tau)
	\text{, where } \tau: = \frac{f}{\sqrt{\lambda}}. 
	\end{equation}
Then $b : \bR \rightarrow [0, \infty)$, $b$ is even in $\tau$, and from \eqref{E:L-min0}:
	\begin{equation}\label{E:beta-min0}
		b(0) \leq b(\tau), \:\: \forall \: \tau.
		\end{equation}
Moreover, $b$ satisfies
		\begin{equation}\label{E:beta-OC}
		b(\tau) = |\tau|, \:\: \forall \: |\tau| \geq 1.
		\end{equation}

In terms of $b$, \eqref{discrL} becomes

\begin{equation}
\label{discr_b}
\frac12\Big[ b\Big(\frac{\tau-h}{\sqrt{1-h^2}}\big)+b\Big(\frac{\tau-h}{\sqrt{1-h^2}}\big)\Big]-b(\tau) \ge 0\,.
\end{equation}


Since $\bL_b$ is concave in the first variable, we know that $b$ is concave.
	We will now use the second order   a.e. Taylor formula for concave functions
	from \cite{EG}:
\begin{equation}
\label{concA}
F(\tau+\eps) = F(\tau) + F'(\tau) \eps + \frac12 F''(\tau) \eps^2 + 
o(\eps^2), \quad \eps\to 0, \quad \text{for a.e.}\,\,x\,,
\end{equation}
We use this formula  for $F=b$ in conjunction with \eqref{discr_b}. We use also the  expansions:
$$
\frac{\tau \pm h}{\sqrt{1-h^2}} - \tau= \pm h +\frac12 \tau  h^2 + o(h^2)=:\eps\,.
$$

The inequality \eqref{discr_b} then obviously implies the following inequality:
	\begin{equation}\label{E:alpha-DI}
	b''(\tau) + \tau b'(\tau) -b(\tau) \geq 0.
	\end{equation}
	We just proved this inequality in a.e. sense. 
	
To pass to distributional sense, we notice that concave $b$ is everywhere defined and continuous. Its  derivative $b'$ is also its distributional derivative, and it is defined everywhere except for countably many jump points and it is a
decreasing function.  

Let $(b)''$ denote the distributional derivative of decreasing function $b'$. Thus it is a non-positive measure. We denote its singular part by symbol $\sigma_s$. 
Hence, in the sense of distributions
\begin{equation}
\label{concD}
(b)'' + \tau b'\, d\tau- b(\tau)\,d\tau= \big(b''(\tau) + \tau b'(\tau) - b(\tau)) \,d\tau + d \sigma_s \le 0\,.
\end{equation}
Hence, now we have  in the sense of distributions the following inequality (it will be used later in this sense):
\begin{equation}\label{E:beta-DI-distr}
	b''(\tau) + \tau b'(\tau) - b(\tau) \geq 0.
	\end{equation}

Since $b$ is even, we focus next only on $\tau \geq 0$. 

The general solution to the differential equation $z''(\tau) + \tau z'(\tau) - z(\tau) = 0$ for $\tau \geq 0$ is
	$$z(\tau) = C \Psi(\tau) + D\tau \text{, where } \Psi(\tau) = \tau\Phi(\tau) + e^{-\tau^2/2}, \:\: \forall \: \tau \geq 0.$$
Note that
	\begin{equation}
	\label{Phi}
	\Psi'(x) =\Phi(x),\,\, \Psi''(x) = e^{-x^2/2}\,.
	\end{equation}
Given our condition that $b(\tau) = \tau$ for all $\tau \geq 1$, a reasonable candidate for our function $b$ is one already proposed by Bollobas \cite{Bollobas}:
	\begin{equation}\label{E:beta-candidate}
	z(\tau) := \left\{ \begin{array}{ll}
		\frac{\Psi(\tau)}{\Psi(1)}, & 0 \leq \tau < 1\\
		\tau, & \tau \geq 1.
		\end{array}\right.
	\end{equation}
In other words, a candidate for $\bL_b$ is
	\begin{equation}
	\label{L1}
	L(f, \la)=\begin{cases}\sqrt{\lambda} \frac{\Psi\left(\frac{|f|}{\sqrt{\lambda}}\right)}{\Psi(1)},\, \sqrt{\la}\ge |f|,\\
	|f|,\, \sqrt{\la}\le |f|\,.\end{cases}
	\end{equation}

Our first goal will be to prove:

\begin{lemma}
\label{subs-2}
The function $L$ defined in \eqref{L1} satisfies \eqref{MIL}.
\end{lemma}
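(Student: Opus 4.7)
The plan is to verify \eqref{MIL} by the same template as Lemma \ref{L:alphaMI}, reduced via homogeneity. Writing $\tau = f/\sqrt{\la}$ and $x = a/\sqrt{\la}$ in \eqref{MIL}, and using evenness of $z$ in $\tau$ together with the $a \leftrightarrow -a$ symmetry to restrict to $\tau \geq 0$ and $0 \leq x < 1$, the main inequality becomes
\begin{equation}\label{E:Reduction}
2z(\tau) \leq \sqrt{1-x^2}\bigl[z(\Xp) + z(\Xn)\bigr],
\end{equation}
with $\Xp, \Xn$ as in \eqref{E:X-def}. The argument then proceeds by case analysis on where $\tau$, $\Xp$, and $|\Xn|$ fall relative to $1$, the thresholds at which $z$ switches between $\Psi/\Psi(1)$ and the identity.

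The heart of the proof is the interior case $\tau < 1$ and $\Xp \leq 1$ (which forces $|\Xn|\leq 1$ too, since $|\Xn|\leq \Xp$). Here $z = \Psi/\Psi(1)$ applies throughout, and setting $g(x) := \sqrt{1-x^2}\bigl[\Psi(\Xp) + \Psi(|\Xn|)\bigr]$ gives $g(0) = 2\Psi(\tau)$, so the claim reduces to $g'(x) \geq 0$. A direct computation using $\Psi(t) = t\Phi(t) + e^{-t^2/2}$, $\Psi' = \Phi$, and $\sqrt{1-x^2}\,X^{\pm}_{\tau,x} = \tau\pm x$ collapses to
$$g'(x) = \Phi(\Xp) - \Phi(\Xn) - \frac{x}{\sqrt{1-x^2}}\bigl[e^{-(\Xp)^2/2} + e^{-(\Xn)^2/2}\bigr].$$
The substitution $t = (\tau+xs)/\sqrt{1-x^2}$ in $\Phi(\Xp) - \Phi(\Xn) = \int_{\Xn}^{\Xp} e^{-t^2/2}\,dt$ then rewrites this as
$$g'(x) = \frac{x}{\sqrt{1-x^2}}\left[\int_{-1}^{1}f(s)\,ds - f(1) - f(-1)\right], \qquad f(s):=\exp\!\Bigl(-\tfrac{(\tau+xs)^2}{2(1-x^2)}\Bigr).$$
A short calculation shows that $f''(s)$ has the sign of $(\tau+xs)^2 - (1-x^2)$, so $f$ is concave on $[-1,1]$ \emph{exactly when} $\Xp \leq 1$. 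Under concavity, $f$ dominates its linear chord between the endpoints, and integrating yields $\int_{-1}^{1}f(s)\,ds \geq f(-1)+f(1)$, whence $g'(x)\geq 0$ and the interior case is closed.

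The remaining cases reduce to elementary one-variable inequalities for $\Psi$. When $\tau \geq 1$, both $z(\tau) = \tau$ and $z(\Xp) = \Xp$; the sub-case $\Xn \geq 1$ yields equality in \eqref{E:Reduction}, while $\Xn \in [0,1)$ collapses, after cancelling $\tau + x$, to $\Xn\,\Psi(1) \leq \Psi(\Xn)$. This last is elementary: $\Psi(s) - s\Psi(1)$ vanishes at $s=1$ with derivative $\Phi(s) - \Psi(1) \leq \Phi(1) - \Psi(1) = -e^{-1/2} < 0$ on $[0,1]$, so it is non-negative there. When $\tau < 1$ but $\Xp > 1$, the companion bound $\Psi(s) \leq s\Psi(1)$ for $s \geq 1$ (same derivative template) shows that replacing $\Psi(\Xp)/\Psi(1)$ by $\Xp$ only enlarges the right side of \eqref{E:Reduction}; combined with the interior estimate evaluated at the boundary $\Xp = 1$, this handles the sub-case $|\Xn| \leq 1$, and the extreme sub-case $\Xn < -1$ reduces to the one-variable inequality $\Psi(1)(\tau + \sqrt{2-\tau^2}) \geq 2\Psi(\tau)$ on $[0,1]$, which follows from the tangent-line bound for the convex function $\Psi$. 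The main obstacle throughout is the integral identity for $g'(x)$ and the recognition that concavity of $f$ on $[-1,1]$ is precisely the condition $\Xp \leq 1$ defining the main case; once this is in hand, all remaining sub-cases are routine bookkeeping.
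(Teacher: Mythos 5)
Your homogeneity reduction of \eqref{MIL} to $2z(\tau)\le\sqrt{1-x^2}\,\bigl[z(\Xp)+z(\Xn)\bigr]$ is correct, and your interior case $\Xp\le 1$ is sound: it is the paper's Case 1 in different clothing (your integral substitution and chord bound for the concave function $e^{-s^2/2}$ on $[-1,1]$ is exactly the content of \eqref{bo3}, obtained there by differentiating in the increment variable). The case $\tau\ge 1$ is also fine, reducing to \eqref{ori}. In the extreme sub-case $\Xn\le -1$ your target inequality $\Psi(1)(\tau+\sqrt{2-\tau^2})\ge 2\Psi(\tau)$ is the right one (it is the paper's Case 2), but ``tangent-line bound for the convex function $\Psi$'' is the wrong tool: tangents \emph{minorize} a convex function, so they cannot give the needed upper bound on $\Psi(\tau)$; you need the chord bound $\Psi(\tau)\le(1-\tau)\Psi(0)+\tau\Psi(1)$ together with a short endpoint/unimodality check (or the paper's convex-versus-concave endpoint argument).

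The genuine gap is the remaining sub-case $\tau<1$, $\Xp>1$, $|\Xn|\le 1$, which is the paper's Case 3 and the crux of the lemma. Your argument there is: since $\Psi(s)\le s\Psi(1)$ for $s\ge 1$, the actual right side dominates the all-$\Psi$ expression, so it suffices to prove $2\Psi(\tau)\le\sqrt{1-x^2}\,\bigl[\Psi(\Xp)+\Psi(\Xn)\bigr]$, which you then attribute to ``the interior estimate evaluated at the boundary $\Xp=1$.'' But that all-$\Psi$ inequality is \emph{false} in this region: take $\tau=0.9$, $x=0.99$, so $\sqrt{1-x^2}\approx 0.141$, $\Xp\approx 13.4$, $\Xn\approx-0.64$; then $\sqrt{1-x^2}\bigl[\Psi(\Xp)+\Psi(\Xn)\bigr]\approx 2.54$ while $2\Psi(0.9)\approx 2.76$. (The true inequality does hold there, $1.89\le 2.01$ after dividing by $\Psi(1)$, precisely because the difference $\Xp-\Psi(\Xp)/\Psi(1)$ that you discard is what saves it.) Moreover, the interior estimate certifies the inequality only \emph{on} the curve $\Xp=1$; for $x$ beyond it you would need the actual right side $(\tau+x)+\sqrt{1-x^2}\,\Psi(\Xn)/\Psi(1)$ not to drop below its value on that curve, and this is not automatic --- it is not even monotone in $x$ (its $x$-derivative is negative at the point above). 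Closing this case is where the paper does its real work: a monotonicity-in-$t$ proof of \eqref{erti1}, resting on the auxiliary inequalities \eqref{ori} and \eqref{sami}, explicit numerical bounds on $\Psi(1)$ and $\sqrt 2$, and a Sturm-sequence check of a quartic. An argument of comparable substance is needed for this sub-case and is missing from your proposal.
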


Since it is easy to verify that $L$ satisfies the range condition $L(f, \lambda) \leq \max\{|f|, \sqrt{\lambda}\}$, we have then that $L$ is a subsolution of \eqref{MIL}, and so
	$$L \leq \bL_b.$$
	
Now we want to prove the opposite inequality
\begin{equation}
\label{bLsm}
\bL_b\le L.
\end{equation}
Recall that we write $\bL_b(f, \lambda) = \sqrt{\lambda} b(\tau)$, where $\tau = \frac{f}{\sqrt{\lambda}}$. We look only at $\tau \geq 0$.
Consider again a new variable
	\begin{equation}\label{E:Tvar-def}
	T := \frac{\tau}{\Psi(\tau)}, \:\: \tau \geq 0.
	\end{equation}
Then
	$$ \frac{dT}{d\tau} = \frac{e^{-\tau^2/2}}{\Psi^2(\tau)}, $$
which shows that $T$ is strictly increasing in $\tau$. Moreover, it is easy to check that for a function $g$, we have
	\begin{equation}\label{E:DI-Tvar}
	\frac{d^2}{dT^2} \left( \frac{g(\tau)}{\Psi(\tau)} \right) = \Psi^3(\tau) e^{\tau^2} (g'' + \tau g' - g).
	\end{equation}
So, if we circle back to our function $b$, and denote
	$$\beta(T) := \frac{b(\tau)}{\Psi(\tau)},$$
the infinitesimal main inequality \eqref{E:beta-DI} for $b$ is equivalent to $\beta_{TT} \geq 0$, or $\beta$ being convex in the variable $T$. Now note that
	$$ \beta'(T) = \bigg(b'(\tau) \Psi(\tau) - b(\tau) \Phi(\tau)\bigg) e^{\tau^2/2}. $$
Since $T = 0$ only at $\tau = 0$, we have 
	$$\beta'(T)|_{T \rightarrow 0_{+}} = b'(0_{+}) \geq 0,$$
where $b'(0_{+})$ denotes the right derivative of $b$ at $0$. This is non-negative because $b$ is a convex, even function.
So now we have that $\beta(T)$ is convex and $\beta'(0_{+}) \geq 0$, showing that $\beta$ is \textit{non-decreasing} for $T\geq 0$. 
Finally, we have then that for any $0 \leq \tau < 1$:
	$$ \frac{b(\tau)}{\Psi(\tau)} \leq  \frac{b(1)}{\Psi(1)} = \frac{1}{\Psi(1)},$$
therefore
	$$ b(\tau) \leq \frac{\Psi(\tau)}{\Psi(1)}, \:\: \forall \: \tau \in [0, 1], $$
which is exactly $\bL_b \leq L$. So Theorem \ref{T:Lb-Bdry} is proved, provided we have Lemma \ref{subs-2}, which we prove next.

\begin{proof}[Proof of Lemma \ref{subs-2}]
In fact, the proof is given in \cite{Bollobas}. It is slightly sketchy and leaves some cases to the reader, so here we follow the proof of \cite{Bollobas} in more details. The proof is divided into several cases.   By symmetry we can always  think that $f\ge  0$ in all cases.  Using the homogeneity we can always assume that $\lambda=1$. 

Case 1) will be when  both points $(f\pm a, 1-a^2)$ lie in $\Omega_{par} :=\{ (p,q) \in \mathbb{R}^{2}\, : q \geq p^{2}\}$.  Clearly then $(f,1)$ will be also in $\Omega_{par}$. 

Notice that $L(f, 1) =\max (\frac{\Psi\left(|f|\right)}{\Psi(1)}, |f|) =\frac{\Psi\left(|f|\right)}{\Psi(1)}$ if  $(f, 1) \in \Omega_{par}$. 

 Put 
\begin{equation}
\label{X}
X(f, a):= \frac{|f+a|}{(1-a^2)^{1/2}},\,\, a\in [-1, 1],\,\, f\in [0, 1)\,.
\end{equation}
Then \eqref{MIL} in our case can be rewritten as 
\begin{equation}
\label{bo1}
2\Psi(f) \le [\Psi(X(f, a)) +\Psi(X(f, -a))]\sqrt{1-a^{2}}.
\end{equation}
 Next,  without loss of generality assume that $a\geq  0$. The inequality is true for $a=0$.

 Let us check that 
\begin{equation}
\label{bo2}
\frac{\pd}{\pd a} \left( \sqrt{1-a^{2}}(\Psi(X(f, a)) +\Psi(X(f, -a)))\right) \ge 0\,.
\end{equation}
Consider the case when $f-a \geq 0$. Notice that 
\begin{align*}
&\frac{\partial}{\partial a} X(f,a) = \frac{1}{\sqrt{1-a^{2}}} + X(f,a) \frac{a}{1-a^{2}};\\
&\frac{\partial}{\partial a} X(f,-a) = -\frac{1}{\sqrt{1-a^{2}}} + X(f,-a) \frac{a}{1-a^{2}}.
\end{align*}

Using the fact that $\Psi'(s) = \Phi(s)$, $\Psi(s) =s\Psi'(s)+e^{-s^{2}/2}$, we get the equality
\begin{align*}
&\frac{\pd}{\pd a} (\Psi(X(f, a)) +\Psi(X(f, -a))) = \frac{1}{\sqrt{1-a^2}} ( \Phi(X(f, a)) - \Phi(X(f, -a)))+ \\
&\frac{a}{1-a^{2}}\left[ \Psi(X(f,a)) - \exp(-(X(f,a))^{2}/2) + \Psi(X(f,-a))- \exp(-(X(f,-a))^{2}/2) \right].
\end{align*}
Therefore
\begin{align*}
&\frac{\pd}{\pd a} \left( (\Psi(X(f, a)) +\Psi(X(f, -a)))\sqrt{1-a^{2}} \right) =( \Phi(X(f, a)) - \Phi(X(f, -a))) \\
&-\frac{a}{(1-a^2)^{1/2}}( e^{-X(f, a)^2/2} +  e^{-X(f, -a)^2/2} )\,.
\end{align*}
But $\frac{a}{(1-a^2)^{1/2}} = \frac12 (X(f, a)- X(f, -a))$, so  to prove \eqref{bo2} one needs to check the following inequality:
\begin{equation}
\label{bo3}
\frac1{X(f, a)-X(f, -a)}\int_{X(f, -a)}^{X(f, a)} e^{-s^2/2} ds \ge \frac12( e^{-X(f, a)^2/2} + e^{-X(f, -a)^2/2})\,.
\end{equation}
This inequality holds because in our case 1) we have $X(f, -a)\in [0,1], X(f, a)\in [0,1]$, and the function $s\mapsto e^{-s^2/2}$ is concave on the interval $[-1,1]$. It is easy to verify that for every concave function on an interval, its integral average over the interval is at least its average over the endpoints of the interval.

If $f-a \leq 0$, then $\frac{\partial}{\partial a} X(f,-a) = \frac{1}{\sqrt{1-a^{2}}} + X(f,-a) \frac{a}{1-a^{2}}$. Repeating the previous calculations verbatim eventually one will need to show the following inequality 
\begin{align*}
\Phi(X(f,a))+\Phi(X(f,-a)) \geq \frac{X(f,a)+X(f,-a)}{2}\left( e^{-X(f, a)^2/2} + e^{-X(f, -a)^2/2}\right),
\end{align*}
which is also true. Indeed, we want to show that $\Phi(a)+\Phi(b) \geq \frac{a+b}{2}(e^{-a^{2}/2}+e^{-b^{2}/2})$ for all $a,b \in [0,1]$. If $a=b$, then the inequality follows because $w(a):= \Phi(a)-ae^{-a^{2}/2}$, $w(a)\geq 0$  at $a=0$ is true, and its derivative is $a^{2}e^{-a^{2}/2}\geq 0$. In general, consider the map 
\begin{align*}
a \mapsto \Phi(a) + \Phi(b) - \frac{a+b}{2} (e^{-a^{2}/2}+e^{-b^{2}/2}) \quad \text{for} \quad a \in [b,1].
\end{align*}
The derivative of this map is $ \frac{1}{2} (e^{-a^{2}/2}  - e^{-b^{2}/2}) + \frac{a+b}{2}\cdot ae^{-a^{2}/2}$ which at point $a=b$ has a nonnegative sign.  Differentiating again we obtain $\frac{e^{-a^{2}/2}}{2}(1-a^{2})(a+b)\geq 0$. This finishes the proof of the  case 1).

\vskip0.5cm

Next,  consider Case 2): when $(f,1) \notin \Omega_{par}$. Then notice that $f  \mapsto L(f,1)$ is convex  as a maximum of two convex functions. Therefore 
\begin{align*}
\frac{1}{2}\left( L(f+a, 1-a^{2}) + L(f-a, 1-a^{2})\right) \geq L(f, 1 -a^{2})  = L(f,1).
\end{align*}

Case 3). Now suppose that  $(f \pm a, 1-a^2)$ are not in  $\Omega_{par}$  and $(f, 1)$ is in $\Omega_{par}$.   We remind that we are considering only $f\ge 0$.
Since $a  \mapsto |f+a|+|f-a|$ is increasing as $a$ increases, it suffices to consider the case when $(f-a,1-a^{2})$  is such that  $(f-a)^{2}=1-a^{2}$, i.e., the left point is on the parabola. Then we need to show that 
\begin{align}\label{patulya1}
2 \frac{ \Psi(f)}{\Psi(1)}\leq |f-a|+f+a.
\end{align}
Clearly $0\leq a\leq 1$. Consider the case when  $0\leq f\leq a$. From $(f-a)^{2}=1-a^{2}$ we obtain that $a-\sqrt{1-a^{2}}=:f(a) \geq 0,$ so $a\geq \frac{1}{\sqrt{2}}$, and the inequality (\ref{patulya1}) simplifies to 
$$
f(a)\leq \Psi^{-1}(\Psi(1)a), \quad 1\geq a\geq  \frac{1}{\sqrt{2}}.
$$
The left hand side is convex and the right hand side is concave (as an inverse of increasing convex function). Since at $t=1$ and $t=\frac{1}{\sqrt{2}}$ the inequality holds then it holds on the whole interval $[1/\sqrt{2},1]$. 

If $f\geq a$, then the condition $(f-a)^{2}=1-a^{2}$ implies that $f = a+\sqrt{1-a^{2}} \geq 1$ for all $a \in [0,1]$. Therefore the inequality (\ref{patulya1}) becomes $\frac{\Psi(f)}{\Psi(1)}\leq f$ which is correct if $f \geq 1$.  Indeed, consider $g(s) = \frac{\Psi(s)}{\Psi(1)} -s$. Then $g(1)=0$, $g'(1)<0$, and $g''(t)\geq 0$. Also $\lim_{s \to \infty} \frac{g(s)}{s} = \frac{\int_{0}^{\infty} e^{-t^{2}/2}dt}{\Phi(1)+\exp(-1/2)} -1 =-0.1428...<0$. This implies that $g(t)\leq 0$ for all $t\geq 1$.

\vskip0.5cm

Case 4a). Next we consider the case when $(f,1)$ is in $\Omega_{par}$, $(f+a,1-a^{2})$ is not in $\Omega_{par}$,  $(f-a,1-a^{2})$  is in $\Omega_{par}$ and it has non-negative first coordinate, i.e., $f-a\geq 0$ (the remaining case with negative first coordinate will be treated in Case 4b)). 

First consider the case when $f-a=0$, i.e., the first coordinate of the left point is zero. Then $f=a$. Since the right point is outside (below) of the parabola we have $\frac{f+a}{\sqrt{1-a^{2}}} = \frac{2a}{\sqrt{1-a^{2}}}\geq 1$. The latter means that $a \in [\frac{1}{\sqrt{3}},1]$. Then we need to show that 
\begin{align*}
2\Psi(a) = 2 \Psi(f) \leq \sqrt{1-a^{2}} \Psi\left(\frac{f-a}{\sqrt{1-a^{2}}}\right) + \Psi(1)(f+a) = \sqrt{1-a^{2}}+2\Psi(1)a.
\end{align*}
The left hand side of the inequality is convex. The right hand side of the inequality is concave. Inequality clearly holds for the endpoint cases, i.e., $a=1$ and $a=\frac{1}{\sqrt{3}}$. Therefore it holds in general. 

Notice that if $(f-a)^{2}=\lambda-a^{2}$ then we are in Case 3). So if we show that the map $a \mapsto L(f+a,1-a^{2})+L(f-a,1-a^{2})$ is concave when $1\geq \frac{f-a}{\sqrt{1-a^{2}}}\geq 0$ (left point is $\Omega_{par}$ with non-negative first coordinate), $f\leq 1$ (the point $(f,1)$ is in $\Omega_{par}$), and $\frac{f+a}{\sqrt{1-a^{2}}}\geq 1$ (the right point is not in $\Omega_{par}$) then this will prove  Case 4a) completely, because the concave function dominates the number $2L(f,1)$ at the endpoints of an interval. We have 
\begin{align*}
L(f+a,1-a^{2})+L(f-a,1-a^{2}) = \sqrt{1-a^{2}} \Psi\left(\frac{f-a}{\sqrt{1-a^{2}}}\right) + \Psi(1)(f+a).
\end{align*}
The second term is linear in $a$. Its first derivative is 
\begin{align*}
-\Phi\left(\frac{f-a}{\sqrt{1-a^{2}}} \right)  - \frac{a}{\sqrt{1-a^{2}}} \exp\left(-\left[\frac{f-a}{\sqrt{1-a^{2}}}\right]^{2}/2\right)+\Psi(1).
\end{align*}

Its second derivative is 
\begin{align*}
\frac{a(a+af^{2}-2f)}{(1-a^{2})^{5/2}}\exp\left(-\left[\frac{f-a}{\sqrt{1-a^{2}}}\right]^{2}/2\right).
\end{align*}
The map $a \mapsto a+af^{2}-2f$ is increasing in $a$. Let us increase $a$. Two scenarios can occur: 1) $f-a=0$ or 2) $\frac{f-a}{\sqrt{1-a^{2}}}=1$.  In the first case we get $a+af^{2}-2f = f(f^{2}-1)\leq 1$ since $0\leq f\leq 1$. In the second case the condition $a \in [0,1]$ implies
$$
a+af^{2}-2f = -a-2\sqrt{1-a^{2}}+a^{2}(\sqrt{1-a^{2}}+a)^{2} = a(a-1)+2\sqrt{1-a^{2}}(a^{3}-1)\leq 0.
$$
 Thus in all cases we obtain $a+af^{2}-2f \leq 0$, therefore this finishes the proof of the case 4a). 

\vskip0.5cm

Case 4b). 
 It remains to show that  if the right point already left $\Omega_{par}$ but the left point is in $\Omega_{par}$ with negative first coordinate,  then \eqref{MIL} still holds. Then the required inequality amounts to 
\begin{align*}
2\Psi(f) \leq \sqrt{1-a^{2}} \Psi\left(\frac{a-f}{\sqrt{1-a^{2}}}\right)+\Psi(1)(f+a),
\end{align*}
where  $|\sqrt{1-a^{2}}-a|\leq f\leq a\leq 1$ (notice that the latter inequality simply means that $\frac{f+a}{\sqrt{1-a^{2}}}\geq 1$, i.e., the right point is not in $\Omega_{par}$, and $\frac{a-f}{\sqrt{1-a^{2}}}\leq 1$, the left point is in $\Omega_{par}$ with negative first coordinate). It is the same as to show 
\begin{align}\label{erti1}
 \Psi\left(\frac{a-f}{\sqrt{1-a^{2}}}\right) + \Psi(1)\left(\frac{a-(\frac{2\Psi(f)}{\Psi(1)}-f)}{\sqrt{1-a^{2}}}\right)\geq 0
\end{align}
for all $0\leq f\leq 1$ if $ \max\{f, \frac{\sqrt{2-f^{2}}-f}{2}\}\leq  a\leq  \frac{f+\sqrt{2-f^{2}}}{2}$. 

 Let as show that the derivative in $a$ of the left hand side of (\ref{erti1}) is nonnegative. If this is the case then we are done because by increasing $a$ we can reduce the inequality to an endpoint case which is already verified. $\Psi$ is increasing (see \eqref{Phi}), and since $fa\leq 1$ therefore $a \mapsto \Psi\left(\frac{a-f}{\sqrt{1-a^{2}}}\right)$, $a \in  [f,1]$ is increasing as a composition of two increasing functions.
 Here we have used the fact that 
 \begin{align*}
 \frac{\partial}{\partial a}\left( \frac{a-f}{\sqrt{1-a^{2}}}\right) = \frac{1-af}{(1-a^{2})^{3/2}}.
 \end{align*}
  To check the monotonicity  of the map $a\mapsto \frac{a-(\frac{2\Psi(f)}{\Psi(1)}-f)}{\sqrt{1-a^{2}}}$ it is enough to verify that $a(\frac{2\Psi(f)}{\Psi(1)}-f) \leq 1$. The latter inequality follows from the following two simple inequalities 
\begin{align}
&\Psi(f) \geq  \frac{\Psi(1) f}{2}, \quad 0\leq f\leq 1, \label{ori}\\
&\left(\frac{f+\sqrt{2-f^{2}}}{2} \right) \left(\frac{2\Psi(f)}{\Psi(1)}-f\right)\leq 1, \quad  0\leq x\leq 1. \label{sami}
\end{align}
Indeed, to verify (\ref{ori}) notice that  $\frac{d}{df}\frac{\Psi(f)}{f} = \frac{f\Phi(f)-\Psi(f)}{f^{2}} = -\frac{e^{-\frac{f^{2}}{2}}}{f^{2}} <0$, therefore $\frac{\Psi(f)}{f}\geq \Psi(1) \geq \frac{\Psi(1)}{2}$. 

To verify (\ref{sami}) it is enough to show that 
$$
\frac{\Psi(x)}{\Psi(1)x}\leq \frac{1}{x^{2}+x\sqrt{2-x^{2}}}+\frac{1}{2}, \quad x \in [0,1].
$$
If $x=1$ we have equality. Taking derivative of the mapping $x \to  \frac{\Psi(x)}{\Psi(1)x} -\frac{1}{x^{2}+x\sqrt{2-x^{2}}}-\frac{1}{2}$ in $x$ we obtain 
$$
\frac{2}{x^{2}}\left(-\frac{e^{-\frac{x^{2}}{2}}}{2\Psi(1)}+\frac{x+\frac{1-x^{2}}{\sqrt{2-x^{2}}}}{(x+\sqrt{2-x^{2}})^{2}}\right)\geq 0.
$$
To prove the last inequality it is the same as to show that $\frac{\sqrt{2-x^{2}}+x(2-x^{2})}{x\sqrt{2-x^{2}}+1-x^{2}}\leq \Psi(1) e^{\frac{x^{2}}{2}}$. For the exponential function we use the estimate $e^{\frac{x^{2}}{2}}\geq 1+\frac{x^{2}}{2}$. We estimate $\sqrt{2-x^{2}}$ from above in the numerator by $\sqrt{2}(1-\frac{x^{2}}{4})$, and we estimate $\sqrt{2-x^{2}}$ from below in the denominator by $(1-\sqrt{2})(x-1)+1$ (as $x\to\sqrt{2-x^{2}}$ is concave). Thus it would be enough to prove that 
$$
\frac{\sqrt{2}(1-\frac{x^{2}}{4})+x(2-x^{2})}{\sqrt{2}x(1-x)+1}\leq \Psi(1) \left(1+\frac{x^{2}}{2} \right), \quad 0\leq x\leq 1.
$$  
If we further use the estimates  $\Psi(1) \geq \frac{29}{28}$,  and $\frac{41}{29}\leq \sqrt{2}\leq \frac{17}{12}$ (for  denominator and numerator correspondingly), then the last  inequality would follow  from  
$$
\frac{29}{240} \cdot \frac{246x^{4}-486x^{3}+233x^{2}-12x-8}{29+41x-41x^{2}}\leq 0.
$$
The denominator has the positive sign. The negativity of $246x^{4}-486x^{3}+233x^{2}-12x-8\leq 0$ for $0\leq x\leq 1$ follows from the Sturm's algorithm,  which shows that the polynomial does not have roots on $[0,1]$. Since at point $x=0$ it is negative therefore it is negative on the whole interval.  
\bigskip 
\end{proof}

\end{document}